\definecolor{rwthorange}{RGB}{246,168,0}
\definecolor{rwthmaigruen}{RGB}{189,205,0}
\definecolor{rwthtuerkis}{RGB}{0,152,161}
\newcommand{\fcolor}{rwthorange}
\newcommand{\mcolor}{rwthtuerkis}
\tikzset{square/.style={regular polygon, regular polygon sides=4}}
\tikzset{patient/.style={square,fill=#1,draw=#1, minimum width=0.95cm}}
\tikzset{stay/.style={#1,line width=4.5pt}}
\tikzset{room/.style={square,fill=none,draw=black,minimum width=0.90cm}}
\tikzset{note/.style={draw=none,fill=none}}
\newcommand{\patient}[4]{
    \foreach \i [evaluate=\i as \m using \i-1] in {1,...,#2}{
        \node[patient=#1](#3\i) at ($#4+(\m*1,0)$) {}; 
    }   
    \ifnum#2>1
        \foreach \i [evaluate=\j as\m using \i-1] in {2,...,#2}{
            \draw[stay=#1] (#3\m) -- (#3\i);
        }   
    \fi 
}
\newcommand{\fpatient}[4][]{\patient{\fcolor #1}{#2}{#3}{#4}}
\newcommand{\mpatient}[4][]{\patient{\mcolor #1}{#2}{#3}{#4}}
\newcommand{\tikztransfer}[3]{
        \draw[stay=#1] (#2.center) -- (#3.center);
}
\newcommand{\ftransfer}[2]{\tikztransfer{\fcolor}{#1}{#2}}
\newcommand{\printRooms}[2]{    
    \tikzmath{\x=0;}
    \tikzmath{\i=0;}
    \foreach \nb/\cap in {#1} {
        \foreach \r in {1,...,\nb}{
            \pgfmathparse{int(\i+1)}
            \xdef\i{\pgfmathresult}
            \draw[very thick] (-0,-\x) -- (#2,-\x);
            \foreach \b in {1,...,\cap}{
                \pgfmathparse{\x+1}
                \xdef\x{\pgfmathresult}
                \foreach \e [evaluate=\e as \d using \e-1] in {1,...,#2}{
                    \node[room](b\i/\b/\e) at (\d+0.5,-\x+0.5) {};
                }
            }
        }
    }
    \draw[very thick] (-0,-\x) -- (#2,-\x);
}
\newcommand{\labelNode}[2]{\node[fill=none,draw=none] at (#1) {\bfseries\large $#2$};}
\setlist{topsep=5pt,leftmargin=33pt}
\newcommand{\gitlink}{\url{https://github.com/TabeaBrandt/patient-to-room_assignment/}}
\definecolor{RWTHblue}{RGB}{0,83,159}
\definecolor{RWTHblack}{RGB}{0,0,0}
\definecolor{RWTHwhite}{RGB}{255,255,255}
\definecolor{RWTHlightblue}{RGB}{142,186,226}
\definecolor{RWTHgrey}{RGB}{51,51,51}
\definecolor{RWTHlightgrey}{RGB}{204,204,204}
\definecolor{RWTHsuperlightgrey}{RGB}{247,247,247}
\definecolor{RWTHpetrol}{RGB}{0,97,101}
\definecolor{RWTHteal}{RGB}{0,152,161}
\definecolor{RWTHmaygreen}{RGB}{189,205,0}
\definecolor{RWTHgreen}{RGB}{87,171,39}
\definecolor{RWTHyellow}{RGB}{255,237,0}
\definecolor{RWTHorange}{RGB}{246,168,0}
\definecolor{RWTHmagenta}{RGB}{227,0,102}
\definecolor{RWTHred}{RGB}{204,7,30}
\definecolor{RWTHbordeaux}{RGB}{161,16,53}
\definecolor{RWTHviolet}{RGB}{97,33,88}
\definecolor{RWTHpurple}{RGB}{122,111,172}
\newtheoremstyle{mystyle}
{3pt}
{3pt}
{\itshape}
{}
{\bfseries}
{}
{.5em}
{}
\theoremstyle{mystyle}
\newtheorem{lemma}{Lemma}
\newtheorem{definition}{Definition}
\newcommand{\setword}[2]{%
  \phantomsection
  #1\def\@currentlabel{\unexpanded{#1}}\label{#2}%
}
\renewcommand{\P}{\mathcal{P}}
\newcommand{\priv}{\P^*}         
\newcommand{\R}{\mathcal{R}}
\newcommand{\nR}{R}
\newcommand{\T}{\mathcal{T}}
\newcommand{\nT}{T}
\newcommand{\fp}{\P^{\mathrm{f}}}  
\renewcommand{\mp}{\P^{\mathrm{m}}}  
\newcommand{\F}{\mathcal{F}}          
\newcommand{\rpold}{\F}          
\newcommand{\nfp}{F_t}
\newcommand{\nmp}{M_t}
\newcommand{\nprivf}{\nfp^*}
\newcommand{\nprivm}{\nmp^*}
\newcommand{\arr}{a}    
\newcommand{\dis}{d}    
\newcommand{\transobj}{f^\mathrm{trans}}    
\newcommand{\privobj}{f^\mathrm{priv}}      
\newcommand{\ftrans}{f^\mathrm{trans}}    
\newcommand{\fpriv}{f^\mathrm{priv}}      
\newcommand{\rc}{c}     
\newcommand{\ass}{z}    
\newcommand{\smax}{s^{\max}}
\DeclarePairedDelimiter\ceil{\lceil}{\rceil}
\newcommand{\N}{\mathbb{N}}
\newcommand{\abs}[1]{|#1|}
\newcommand{\pluseq}{\mathrel{{+}{=}}}
\newcommand{\np}{\mathcal{NP}}
\newcommand{\ppp}{PPP\xspace}
\begin{document}

\title[Structural insights about assigning single rooms in the patient-to-room assignment problem]{Structural insights about assigning single rooms in the patient-to-room assignment problem and an IP-based solution method}


\author*[1]{\fnm{Tabea} \sur{Brandt}}\email{brandt@combi.rwth-aachen.de}

\author[1]{\fnm{Christina} \sur{Büsing}}\email{buesing@combi.rwth-aachen.de}

\author[1]{\fnm{Felix} \sur{Engelhardt}}\email{engelhardt@combi.rwth-aachen.de}
\equalcont{These authors contributed equally to this work.}

\affil*[1]{\orgdiv{Department of Computer Science}, \orgname{RWTH Aachen}, \orgaddress{\street{Ahornstraße 55}, \city{Aachen}, \postcode{52074}, \state{Aachen}, \country{Germany}}}


\abstract{Patient-to-room assignment (PRA) is a scheduling problem in decision support for hospitals.
It consists of assigning patients to rooms according to certain objectives, e.g., avoiding transfers and respecting single-room requests.
This work presents combinatorial insights about the feasibility of PRA and about the assignment of patients to single rooms.
We further compare different IP-formulations for PRA as well as the influence of different objectivs on the runtime.
Based on these results, we develop a fast IP-based solution approach which obtains high quality solution.
The applicability is verified through a computational study with instances derived from real-world data.
Results indicate that large, real world instances can be solved to a high degree of optimality within (fractions of) seconds.
}

\keywords{combinatorial optimization, bed management, binary integer programming, integrated planning}

\maketitle

\clearpage
\section{Introduction and problem definition}
\label{problemdefinition}
Beds and rooms for patients are important resources in hospitals and the decision which bed and room a patient occupies
impacts not only the staff's workload \cite{blay2017_bedtransfers}, but also patient satisfaction \cite{transferfun}, and the provision of surcharges \cite{transferprice}. 
The assignment of patients to beds and rooms is usually either performed by so-called case managers or by experienced nurses.
In literature, both the terms patient-to-room assignment problem (PRA) and patient-to-bed assignment problem (PBA) have been used to describe this task.
Here, the term bed is used synonymously for bed space.
In general, there are different bed types, e.g., for small children or heavy weight patients which are provided as rolling stock.
A room's bed spaces, however, can be considered as equal.
The task of finding a physical bed of appropriate bed type for a patient is independent of assigning the patient to a room/bed space.
We therefore use the term PRA to avoid confusion.

Typically, there are two types of case management systems in hospitals: centralised and decentralised systems.
In a centralised system, all patient-to-room assignments are decided by the same person or work group.
Whereas in a decentralised system, the patient-to-room assignments are decided on ward or speciality level~\cite{Schmidt2013}.
In both cases, PRA is based on a previously fixed admission scheduling decision.
In the first formal definition of PRA proposed by Demeester et al. in 2010~\cite{DEMEESTER2010}, they considered a centralised system with multiple wards and specialities.
Patients then need a room in a ward with appropriate specialty.
This definition is still often used in literature.
However, we found that in our local hospital a decentralised system is used.
In this work, we present combinatorial insights and a solution approach for a decentralised system where patient-to-room assignments are decided on ward level.
Combinatorially, the decentralised system is a special case of the centralised system.

Another characteristic of the definition proposed by Demeester et al. is that some patients may only be assigned to specific rooms to account for, e.g., equipment requirements~\cite{DEMEESTER2010}.
However, we found that in our local hospital all rooms have the same default equipment and the special equipment is rolling stock.
Therefore, we assume that a ward's rooms are all equal and that every patient can be assigned to every room.
We experience this to be a common setting in German hospitals.

Formally, we consider a ward with rooms $\R$ and $\rc_r\in\N$ beds in room $r\in \R$, as well as a discrete planning horizon $\T=\{1,\ldots,\nT\}$.
In our computational study, we use 24h as length of one time period so that $\nT$ refers to the number of days in the planning horizon.
However, all concepts in this paper are easily transferable to half-day or even smaller planning intervals.
Further, let $\P$ denote the set of all patients.
For every patient $p\in \P$, we know their registration period, their arrival period $\arr_p\in \T_0 := \T \cup \{0\}$, their discharge period $\dis_p\in \T$ satisfying $\arr_p<\dis_p$, their sex, and whether they are entitled to a single room.
Commonly, patients whose arrival and registration periods are identical are called \emph{emergency patients} and otherwise \emph{elective patients}~\cite{bedmanagement_review}.
Here, patients with $\arr_p=0$ are patients who arrived in an earlier time period.
Therefore, those patients already have pre-assigned rooms which are given in the set $\F\subset \{p\in \P \mid \arr_p=0\}\times \R$.
We denote the set of female patients with $\fp\subseteq \P$, the set of male patients with $\mp\subseteq \P$, and the set of patients entitled to a single room with $\priv\subseteq\P$.
Note that we assume $\fp\cap\mp=\emptyset$ and $\P=\fp\cup\mp$ based on the data provided by our local hospital.

The main task in PRA is to assign every patient $p\in \P$ to a room $\ass(p,t)\in \R$ for every time period $\arr_p\leq t<\dis_p$ of their stay.
We assume that all patients stay in hospital on consecutive periods from admission to discharge period and that they are discharged at the beginning of a time period.
Thus, patients do not need a room in their discharge period, which is a common assumption in literature, cf.~\cite{Vancroonenburg2016}.
We define the set of all patients who need a room in time period $t\in \T$ as $\P(t)=\{p\in \P\mid \arr_p\leq t<\dis_p\}$. 
Further, we denote for any subset of patients $S\subset\P$ the subset of patients in need for a bed in time period $t\in \T$ by $S(t):=S\cap \P(t)$.

The assignment $\ass$ of patients to rooms has to fulfil two conditions for every room $r\in \R$ and every time period $t\in \T$ in order to be feasible:
\begin{description}
    \item[\setword{(C)}{Word:C}] room capacities $\rc_r$ are respected, i.e., $|\{p\in \P \mid \ass(p,t)=r\}|\leq \rc_r$,
    \item[\setword{(S)}{Word:G}] female and male patients are assigned to separate rooms, i.e.,
\[\{z(p,t)\mid p\in \fp(t)\} \cap \{z(p,t)\mid p\in \mp(t)\}=\emptyset.\]
\end{description}
Theoretically, these constraints may lead to infeasibility, which is unacceptable in practical application.
However, we assume based on practitioners demands that the case manager ensures respect of the ward's capacity under consideration of the sex separation condition.
Therefore, all considered instances in this paper are feasible under both conditions \ref{Word:C} and \ref{Word:G}.
We discuss how feasibility under these constraints can be checked combinatorially in \cref{sec:feas}.

Real-life optimization problems often have to balance the, potentially conflicting, interests of multiple stakeholders.
For PRA, Schäfer et al. identified patients, doctors and nurses as the main stakeholders~\cite{schaefer2019}.
Based on literature and interviews with members of each of the groups, they concluded that patients primarily desire a pleasant stay, i.e., a bed in a suitable room, without unnecessary transfers or waiting in an overflow area, and suitable roommates.
Doctors primarily look for visiting rounds that minimize walking distances.
In comparison to that, nursing staff emphasise the relevance of a balanced workload~\cite{schaefer2019}.
A common approach, also used by Sch\"afer et al., is to combine the objectives of all stakeholders into one objective function as a weighted sum.
However, the appropriate choice of weights is not obvious and depends strongly on the hospital management's values.
On the contrary, we consider only two objective functions and attempt a thorough investigation of their combinatorial structure, their performance in BIPs and their interoperability.
For this, we consider the objectives both separately and in different hierarchical orders that are motivated by the different stakeholders' points of view. 

Our first objective is to avoid that patients have to change rooms during their stay, so-called \emph{patient transfers}.
Patient transfers increase the staff's workload and reduce patient satisfaction while providing no immediate health benefits for patients \cite{storfjell2009}.
A case study by Blay et al. reports that transfers require on average between $11$min (intra-ward transfer) and $25$min (inter-ward transfer) of direct nursing time  \cite{blay2017_bedtransfers}. 
Additionally, there are several ways \cite{fekieta} in which these transfers can put patients health at risk, e.g., by leading to delays in care \cite{johnson}, interruptions in treatment \cite{papson} and increased infections \cite{blay2017_adverseOutcomes}. 
Therefore, we minimise the total number of patient transfers
\[
\ftrans:= \sum_{p \in \P}\left(\sum_{t=\arr_p}^{\dis_p-2} |\{\ass(p,t),\ass(p,t+1)\}|-1\right).
\]

Another possibility of addressing the topic of patient transfers is to minimise the maximum number of transfers per patient.
According to Brandt et al., those two interpretations of avoiding transfers are not conflicting but can be optimised simultanously for the case of double rooms~\cite{1perpatient}.
More precisely, they showed that there always exists an optimal solution with regard to $\ftrans$ where each patient is transferred at most once.
Therefore, we choose the minimisation of the total number of transfers as objective function.
Our computational experiments with real life data showed that in an optimal solution with respect to $\ftrans$ no patient is transferred twice regardless of an upper bound on the number of transfers per patient.
Moreover, the enforcement of an upper bound on the number of transfers per patient did not affect the runtime.
Therefore, we exclusively consider $\transobj$ as objective function for transfers.

Our second objective is the assignment of single rooms to patients who need isolation for medical reasons or who are entitled to one because of a private health insurance (\emph{private patients}).
In practice, medical reasons have priority but also the latter case is of high interest for the hospital management as such additional services provide income opportunities, with, e.g., a single room surcharge numbering $175$\EUR{} per day \cite{ukafees}.
Unfortunatly however, we lack data on which patients require isolation for medical reasons.
Therefore, we focus here on the assignment of single rooms to private patients.
In Germany, the fees for a single room are paid by the insurance companies for every day individually.
Therefore, we maximise the total number of time periods that private patients spend alone in a room, i.e.,
\[
\fpriv:= \sum_{t \in \T}\left(\sum_{\substack{p \in \priv(t)}} 1-\min\left\{1,~\abs{\{q\in \P(t)\setminus \{p\} \mid \ass(p,t)=\ass(q,t)\}}\right\}\right).
\]
Remark that all our results can easily be extended to incorporate medically necessary isolation either as an additional objective, analagously to $\fpriv$ but with higher priority, or as an additional feasibility condition.

\todo[inline]{Discuss whether we need \cref{fig:fig_intro} as long as no other figures in this style are added.}
\cref{fig:fig_intro} illustrates the role of both objectives. Note that, in general, transfers are necessary for feasibility in PRA. In the example, two rooms and a time horizon of three time steps are given. In step one, both rooms are assigned two male/female patients each. After the first step, the two male patients leave and a third female patient arrives. Now, as two more male patients arrive in the third step, a transfer is necessary to ensure feasibility. 
The example also includes a private patient which is marked by a $*$. Here, the private patient can be charged a single room surcharge only for the second time step.
Hence, for the assignment depicted in the example we have $\fpriv=\ftrans=1$ and this is optimal for both objectives.

\begin{figure}[htb] \centering
\resizebox{!}{0.2\textheight}{
    \begin{tikzpicture}

\begin{scope}
\begin{pgfonlayer}{background}
    \printRooms{2/2}{3}
\end{pgfonlayer}
\mpatient{1}{a}{(b2/1/1)}
\mpatient{1}{b}{(b2/2/1)}
\fpatient{1}{c}{(b1/2/1)}
\fpatient{3}{d}{(b1/1/1)}
\fpatient{2}{e}{(b1/2/2)}
\fpatient{1}{c'}{(b2/1/2)}
\ftransfer{c1}{c'1}
\labelNode{c1}{*}
\mpatient{1}{f}{(b2/1/3)}
\mpatient{1}{g}{(b2/2/3)}


\node[draw=none] at (1.5,-4.5) {assignment};

\draw[decoration={brace,amplitude=8pt,raise=2pt},decorate,very thick](3,0) -- node[right=10pt] { room $r_1$} (3,-2); 
\draw[decoration={brace,amplitude=8pt,raise=2pt},decorate,very thick](3,-2) -- node[right=10pt] { room $r_2$} (3,-4); 

\node at(-1,-0.5) {bed $b_0$};
\node at(-1,-1.5) {bed $b_1$};
\node at(-1,-2.5) {bed $b_2$};
\node at(-1,-3.5) {bed $b_3$};
\end{scope}

\begin{scope}[shift={(-5.5,0)}]
\draw[very thick] (0,0) -- (3,0);
\foreach \i in {0,...,3}{
    \node[rectangle,minimum width=0cm,minimum height=0.4cm,draw,inner sep=0pt] at (\i,0) {}; }
\node[draw=none,label=above:{1}] at (0.5,0) {}; 
\node[draw=none,label=above:{2}] at (1.5,0) {}; 
\node[draw=none,label=above:{$\T=3$}] at (2.5,0) {}; 
\fpatient{3}{d}{(0.5,-0.5)}
\fpatient{2}{c}{(0.5,-1.5)}
\mpatient{1}{f}{(2.5,-1.5)}
\mpatient{1}{a}{(0.5,-2.5)}
\fpatient{2}{e}{(1.5,-2.5)}
\labelNode{c1}{*}
\mpatient{1}{b}{(0.5,-3.5)}
\mpatient{1}{g}{(2.5,-3.5)}

\node[draw=none] at (1.5,-4.5) {instance};
\end{scope}

\begin{scope}[shift={(6.0,0)}]
\fpatient{1}{f}{(0,-0.5)}
\node[draw=none, label={[label distance=0.5cm]0:{female patient}}] at (f1) {}; 
\mpatient{1}{m}{(0,-1.5)}
\node[draw=none, label={[label distance=0.5cm]0:{male patient}}] at (m1) {};
\patient{white}{1}{g}{(0,-2.5)}
\labelNode{g1}{*}
\node[draw=none, label={[label distance=0.5cm]0:{private patient}}] at (g1) {};
\node[room](r) at (0,-3.5) {};
\node[draw=none, label={[label distance=0.5cm]0:{empty bed}}] at (r) {};
\fpatient[,minimum width=0.7cm]{1}{g}{(0.3,-4.5)}
\fpatient[,minimum width=0.7cm]{1}{g'}{(-0.3,-5.2)}
\ftransfer{g1}{g'1}
\node[draw=none, label={[label distance=0.5cm]0:{patient transfer}}] at (0,-5.00) {};
\end{scope}

\end{tikzpicture}
    }
    \caption{Example for $\T=3$ with one private patient where a patient transfer is necessary for
    feasibility}
    \label{fig:fig_intro}
\end{figure}
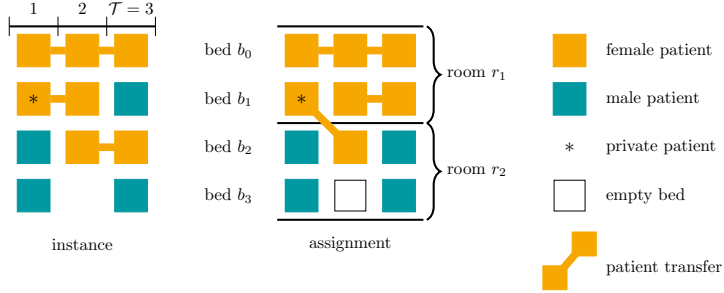

The remainder of this paper is organized as follows.
In \cref{sec:literaturereview}, we give an overview of existing research on integer programming in the context of PRA.
In \cref{sec:combi}, we present combinatorial insights into both feasibility and the maximum number of private patients that can be assigned a single bed each day. 
Then, in \cref{sec:ip:general} we propose and compare multiple IP formulations for PRA.
The computational evaluation shows that in most our (real-life) instances, no transfers are necessary.
Building on that, we propose and compare a second set of IP formulations that contain no transfers in \cref{sec:ip:notransfer}.
In \cref{sec:modelcombi}, we combine the best performing IP formulations with our combinatorial insights from \cref{sec:combi} to solve a dynamic version of PRA with a rolling-time-horizon approach.
Although PRA  is known to be  $\mathcal{NP}$-hard \cite{1perpatient}, we find solutions that are optimal or close to optimality for instances derived from real-world data.
Furthermore, on average, or algorithm requires less than a second per day to find high quality solutions for realistically sized instances.
Finally \cref{sec:further_research}, we point out multiple directions for further research.

\section{Literature review}\label{sec:literaturereview}

In 2010, Demeester et al.
provided the first formal definition of a PRA problem and
proposed a tabu-search algorithm for what they called the \say{problem of automatically and dynamically assigning patients to beds in a general hospital setting} \cite{DEMEESTER2010}.
According to their problem definition, patients have to be assigned to suitable rooms respecting numerous equipment, specialism, and age constraints.
However, limiting a patient's room choices immediately renders the task of assigning patients to beds/rooms $\np$-complete~\cite{1perpatient}.
Ceschia and Schaerf extended the definition by Demeester et al. to include dynamic admission, operating room constraints, time horizons, and patient delays \cite{ceschia2009,ceschiaOKT2011,ceschia2016}.

Next, we give an overview of published integer programming approaches in PRA. For a general literature review on PRA we refer to~\cite{brandt2023integrate,summary}.
A frequent pattern in literature on PRA is to use integer programming to formalize the problem defintion, but not to use it integer programming as a solution method.
This may be due to the fact that, in 2010, Demeester et al. considered integer programming as solution approach.
However, the authors dismissed this, as the given formulation did not result in a feasible solution within an hour and even after a week of computation, no optimal solution was obtained using standard solver software \cite{DEMEESTER2010}.
Ceschia and Schaerf also used an exact solver based on integer linear programming as a reference for small instances, while noting its inability to solve larger instances  \cite{Ceschia2012}.

Nonetheless there are several publications that specifically make use of MIP based solution approaches: Schmidt et al. (2013) define a binary integer program based on patients’ LOS and use it to compare an exact approach, using the MIP solver SCIP, with three heuristic strategies \cite{Schmidt2013}.
Range et al. (2014) reformulate Demeester et al.'s patient admission scheduling problem via Dantzig-Wolfe decomposition and apply a heuristic based on column generation to solve it \cite{Range2014}.
Turhan and Bilgen propose two MIP based heuristics which achieve high quality solutions in fast runtimes compared to respective state of the art studies~\cite{turhan2016}. 
Vancroonenburg et al. (2016) extend the patient assignment problem formulation and develop two corresponding online ILP-models.
The first model focuses on newly arrived patients, whereas the second also considers planned future patients.
They then study the effect of uncertainty in the patients’ LOS, as well as the effect of the percentage of emergency patients.
Note that in all of the cases mentioned above, integer programming is used either as a basis for the development of heuristic solutions or as a reference for small instances, but no exact solving of larger real-world instances is attempted.

Some recent publications also employ integer programming to model both PRA and operating-room usage exactly \cite{conforti2018paper1,conforti2018paper2}. 
However, the models include significant simplifications: fixed room-sex assignment, no transfers and a limited time-horizon.

Most recently, Bastos et al. (2019) present an MIP approach to patient admission scheduling problem, which involves assigning patients to beds over a given time horizon so as to maximize treatment efficiency, patient comfort and hospital utilization, while satisfying all necessary medical constraints and taking into consideration patient preferences as much as possible \cite{Bastos2019}.

There also exist solution approaches that are not based on Demeester's problem definition but are inspired by the setting in a specific hospital.
Thomas et al. developed a MIP based deciscion support system that balances 13 objectives~\cite{Bollapragada2013}
Schäfer et al. disallow (non-medically induced) patient transfers but include overflow and patient preferences~\cite{schaefer2019}.
They also model doctor preferences, i.e., homogenous routes, and then solve the model via a greedy look-ahead heuristic.
In a follow-up publication, they focus on emergency patients and integrate them into the model \cite{schaefer2023}.
Brandt et al. propose a MIP based heuristic for integrated planning of patient-to-room and nurse-to-patient assignment~\cite{brandt2023integrate}.
More generally, Rachuba et al. introduce a taxonomy for evaluating integration consisting of three stages: linkage by constraints/restrictions, sequential and completely integrated planning~\cite{rachuba2023integrated}. Here, our work can contribute to multiple levels, with the combinatorial insights facilitating easy linkage by constraints/restrictions and the IP based approach being suitable for fully integrated planning.

Combinatorial insights about patient-to-room assignment and its underlying structure are still rare.
In \cite{1perpatient} the complexity is discussed and insights about the necessity of transfers are provided, including upper bounds on the total number of transfers as well as the maximum number of transfers per patient.

\section{Combinatorial insights}
\label{sec:combi}
In this section, we first present new combinatorial insights regarding the feasibility of instances with single and double rooms which extend the known results on feasibility from Brandt et al.~\cite{1perpatient}.
Second, we present a combinatorial formula to compute the maximum number of private patients who can be feasibly assigned to single rooms.
Both questions can be decided independently for every single time period, since we allow arbitrary many transfers.
Therefore, in this section we consider an arbitrary but fixed time period $t\in\T$ and abbreviate the number of female patients who are in hospital in time period $t$ with $\nfp:=\abs{\fp(t)}$, and respectively the number of male patients, female private patients, and male private patients needing a bed in time period $t$ with $\nmp:=\abs{\mp(t)}$, $\nprivf:=\abs{\fp(t)\cap\priv(t)}$, and $\nprivm:=\abs{\mp(t)\cap\priv(t)}$.
We further denote with
$\nR_\rc:=\abs{\{r\in\R\mid \rc_r=\rc\}}$
the number of rooms with a specific capacity $\rc\in\N$.

\subsection{Feasibility}
\label{sec:feas}
Brandt et al. define the feasibility problem for an arbitrary but fixed time period $t\in\T$ as follows~\cite{1perpatient}.

\begin{definition}[Feasibility Problem]
Given the number of female and male patients $\nfp,\nmp\in\N_0$, and room capacities $\rc_r\in\N$ for $r\in\R$, does there exist a subset $S\subseteq\R$ of rooms such that it can host all female patients while the male patients fit into the remaining rooms, i.e.,
\begin{equation}\label{eq:def:feas}
\sum_{r\in S} \rc_r \geq \nfp \quad \text{and}
\sum_{r\in \R \setminus S} \rc_r \geq \nmp?
\end{equation}
\end{definition}

Brandt et al. prove that the feasibility problem is $\np$-complete in general and solvable in polynomial time for constant room capacities $\rc_r=\rc\in\N$~\cite{1perpatient}.
Clearly, in the common case of rooms with only double rooms an instance is feasible if and only if
\begin{equation}\label{eq:feas:double}
    \ceil*{\frac{\nfp}{2}} + \ceil*{\frac{\nmp}{2}} \le \abs{\R}
\end{equation}
holds true for every time period $t \in \T$~\cite{1perpatient}.
However, this is no longer accurate for wards that have at least one single room in addition to double rooms otherwise.
For those, it suffices to check whether enough beds are available in total.

\begin{lemma}
\label{lem:feas:1-c}
Consider a ward with room capacities $\rc_r\in\{1,c\}$ with $\rc\in\N$ for all rooms $r\in\R$.
Let the number of female and male patients $\nfp,\nmp\in\N_0$,
be given.
If $\nR_1\geq \rc-1$, then the instance is feasible if and only if the number of patients does not exceed the ward's total capacity, i.e., if and only if
\begin{equation}\label{eq:feas:mixed}
    \nfp +\nmp \le \sum_{r \in \R}c_r 
\end{equation}
holds true for every time period $t \in \T$.
\end{lemma}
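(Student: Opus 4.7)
The proof naturally splits into necessity and sufficiency. Necessity is immediate: if an assignment satisfies conditions \ref{Word:C} and \ref{Word:G}, then every bed hosts at most one patient so the total capacity must be at least the total number of patients.

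For sufficiency, the plan is to reduce the feasibility definition to an integer representation question. Denote $T := \sum_{r\in\R}c_r = \nR_1 + c \cdot \nR_c$ for the total capacity. By the definition of feasibility, it suffices to exhibit integers $0 \le a \le \nR_1$ and $0 \le b \le \nR_c$ with $a + bc = \nfp$, because then choosing $S$ to consist of $a$ single rooms and $b$ rooms of capacity $c$ yields $\sum_{r \in S} c_r = \nfp \ge \nfp$ and $\sum_{r \in \R \setminus S} c_r = T - \nfp \ge \nmp$ (the last inequality is exactly the hypothesis $\nfp + \nmp \le T$).

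So the central lemma I would establish is:

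\textbf{Claim.} If $\nR_1 \ge c-1$, then every integer $k \in \{0, 1, \ldots, T\}$ admits a representation $k = a + bc$ with $0 \le a \le \nR_1$ and $0 \le b \le \nR_c$.

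To prove the claim, write $k = qc + r$ by division with remainder, so $0 \le r < c$ and $0 \le q \le \lfloor T/c \rfloor$. The hypothesis $\nR_1 \ge c-1 \ge r$ guarantees that $r$ itself is a legal choice for $a$ whenever it is compatible with the bound on $b$. I would split into two cases:
\begin{itemize}
    \item If $q \le \nR_c$, set $a := r$ and $b := q$; both bounds are satisfied directly.
    \item If $q > \nR_c$, use the ``spillover'' of single rooms: set $b := \nR_c$ and $a := k - \nR_c c = (q - \nR_c)c + r$. This gives $a \ge 0$, and since $k \le T = \nR_1 + \nR_c c$ we obtain $a \le \nR_1$ as required.
\end{itemize}

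Applying the claim with $k := \nfp \in \{0, \ldots, T\}$ yields the desired partition and concludes sufficiency. The main conceptual step is recognising that the hypothesis $\nR_1 \ge c-1$ is precisely what lets the single rooms realise every residue class modulo $c$, so that the set of achievable subset capacities is the full interval $\{0, 1, \ldots, T\}$ rather than a proper subset. The two-case argument in the claim is the only part requiring care; everything else reduces to bookkeeping.
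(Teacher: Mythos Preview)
Your proof is correct and in fact cleaner than the paper's. The paper argues sufficiency by tracking female and male patients in parallel: it sets $k=\min\{\lfloor \nfp/\rc\rfloor,\nR_\rc\}$ and $\ell=\min\{\lfloor \nmp/\rc\rfloor,\nR_\rc-k\}$, then splits into two cases according to whether the leftover patients $\nfp-\rc k+\nmp-\rc\ell$ fit into the $\nR_1$ single rooms, and in the second case has to verify separately that both sides of \eqref{eq:def:feas} hold. Your reduction avoids this entirely: by hitting $\nfp$ \emph{exactly} as a subset capacity, the complement automatically covers $\nmp$, so the male side never enters the case analysis. The integer-representation claim isolates the role of the hypothesis $\nR_1\ge \rc-1$ transparently (every residue modulo $\rc$ is reachable), whereas in the paper that hypothesis surfaces only at the very end of the second case. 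Both arguments are short, but yours has one fewer moving part and makes the tightness of the bound more visible.
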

\begin{proof}
For $\rc=1$, the instance is obviously feasible if and only if \cref{eq:feas:mixed} holds true. Therefore, let $\rc\geq 2$.
If the number of patients exceeds ward's capacity, i.e.,
\cref{eq:feas:mixed} is violated, then the instance is infeasible as at least one patient cannot be assigned to a room without violating the capacity constraint.
Hence, we assume \cref{eq:feas:mixed} to hold true and show that the instance is then feasible by constructing a set $S\subseteq\R$ which satifies~\cref{eq:def:feas}.

Then, we compute the minimum of the number of rooms of capacity $\rc$ we could fill with female patients
\[k:=\min\left\{\left\lfloor\frac{\nfp}{\rc}\right\rfloor,\nR_\rc\right\},\]
and respectively for male patients
\[\ell:=\min\left\{\left\lfloor\frac{\nmp}{\rc}\right\rfloor,\nR_\rc-k\right\}.\]
Remark that $\nR_\rc\geq k+\ell$, $\nfp-\rc k\geq 0$, and $\nmp-\rc \ell\geq 0$ by construction.
On the one hand, if $\nR_1\geq \nfp-\rc k+\nmp-\rc\ell$, i.e., all remaining patients can be assigned to single rooms, then
we define $S:=S'\cup S''$ and
\begin{align*}
 S'&\subseteq\{r\in\R\mid \rc_r=\rc\}\quad\text{with } \abs{S'}=k,\\
 S''&\subseteq\{r\in\R\mid \rc_r=1\}\quad\text{with } \abs{S''}=\nfp-\rc k.
\end{align*}
Then,
\begin{align*}
\sum_{r\in S} \rc_r &= \sum_{r\in S'}\rc_r + \sum_{r\in S''} \rc_r = \rc k + \nfp- \rc k = \nfp,\\
\sum_{r\in \R\setminus S} \rc_r &= \sum_{r\in \R} \rc_r - \sum_{r\in S} \rc_r \stackrel{\cref{eq:feas:mixed}}{\geq} \nfp + \nmp - \nfp = \nmp,
\end{align*}
i.e., the feasibility condition \cref{eq:def:feas} is satisfied and the instance is feasible.

On the other hand, if $\nR_1 < \nfp-\rc k+\nmp-\rc\ell$, then
\begin{align*}
\nR_\rc &\stackrel{\eqref{eq:def:feas}}{\geq} \frac{1}{\rc}(\nfp+\nmp-\nR_1)\\
&>\frac{1}{\rc}(\nfp+\nmp-\nfp+\rc k -\nmp + \rc\ell = k + \ell.
\end{align*}
Therefore, we also have $\nfp - \rc k < \rc$ and $\nmp - \rc \ell < \rc$ and
we define $S\subseteq\{r\in\R\mid \rc_r=\rc\}$ arbitrary with $\abs{S}=k+1$.
Then
\begin{align*}
\sum_{r\in S} \rc_r &= \rc k + \rc > \rc k + \nfp - \rc k = \nfp,\\
\sum_{r\in \R\setminus S} \rc_r &= \nR_1 + \rc (\nR_\rc-k-1)\\
&\stackrel{\nR_\rc\geq k+\ell+1}{\geq} \nR_1 + \rc (k+\ell+1 -k -1) = \nR_1 + \rc\ell\\
&\stackrel{\nR_1\geq \rc -1}{\geq} \rc -1 +\rc\ell \stackrel{\rc-1\geq\nmp-\rc\ell}{\geq} \nmp-\rc\ell + \rc \ell\geq \nmp,
\end{align*}
i.e., the feasibility condition \cref{eq:def:feas} is satisfied and the instance is feasible.
\end{proof}

Remark that the condition $\nR_1\geq \rc-1$ in \cref{lem:feas:1-c} is tight:
let $c=3$ and $\nR_1< \rc-1$, i.e., let $\nR_1=\nR_2=1$. Then an instance with $\nfp=\nmp=2$ satisfies \cref{eq:feas:mixed}, however, there exists no feasible solution.

\cref{lem:feas:1-c} covers especially the case of wards with single and double rooms only.
Additionally, we can use it to derive a similar result for wards with rooms of even capacity.

\begin{lemma}
\label{lem:feas:2-4}
Consider a ward with room capacities $\rc_r\in\{2,2\rc\}$ with $c\in\N_{\geq 2}$ for all rooms $r\in\R$.
Let the number of female and male patients $\nfp,\nmp\in\N_0$,
be given.
If $\nR_2\geq \rc-1$, then the instance is feasible if and only if for every time period $t \in \T$ one of the two following conditions holds true.
\begin{enumerate}
    \item $\nfp$ and $\nmp$ are both even and the number of patients does not exceed the ward's total capacity, i.e.,
        $\nfp +\nmp \le \sum_{r \in \R}c_r$\label{itm:feas:2-4:a}
    \item the number of patients is strictly smaller than the ward's total capacity, i.e.,
        $\nfp +\nmp < \sum_{r \in \R}c_r$\label{itm:feas:2-4:b}
\end{enumerate}
\end{lemma}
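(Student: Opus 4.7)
The plan is to reduce \cref{lem:feas:2-4} to \cref{lem:feas:1-c} by a scaling argument that exploits the fact that all capacities are even. Observe that $\sum_{r\in S}\rc_r$ is even for every $S\subseteq \R$, so the inequality $\sum_{r\in S}\rc_r\ge \nfp$ is equivalent to $\sum_{r\in S}\rc_r\ge 2\lceil \nfp/2\rceil$ regardless of the parity of $\nfp$, and analogously for $\nmp$. Writing $\rc_r=2\tilde\rc_r$ with $\tilde\rc_r\in\{1,\rc\}$, the feasibility condition \eqref{eq:def:feas} is therefore equivalent to feasibility of a surrogate instance on the same rooms but with capacities $\tilde\rc_r$ and with $\lceil \nfp/2\rceil$ female respectively $\lceil \nmp/2\rceil$ male patients.

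Since the hypothesis $\nR_2\ge \rc-1$ guarantees that the surrogate has at least $\rc-1$ rooms of capacity $1$, \cref{lem:feas:1-c} applies. It yields that the original instance is feasible if and only if
\[
2\lceil \nfp/2\rceil + 2\lceil \nmp/2\rceil \;\le\; \sum_{r\in\R}\rc_r.
\]

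A short case distinction on the parities of $\nfp$ and $\nmp$ then identifies this inequality with \cref{itm:feas:2-4:a} or \cref{itm:feas:2-4:b}. If both are even, the ceilings vanish and the bound becomes exactly the inequality in \cref{itm:feas:2-4:a}. If exactly one of them is odd, the bound reads $\nfp+\nmp+1\le\sum_{r\in\R}\rc_r$, which, since the right-hand side is integral, is equivalent to the strict inequality in \cref{itm:feas:2-4:b}. If both are odd, the bound reads $\nfp+\nmp+2\le\sum_{r\in\R}\rc_r$, and since both $\nfp+\nmp$ and $\sum_{r\in\R}\rc_r$ are then even this again reduces to $\nfp+\nmp<\sum_{r\in\R}\rc_r$. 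The only delicate point is keeping track of when the inequality should be strict versus non-strict, as dictated by the parity-induced rounding; once the halving reduction is set up, the statement is a direct corollary of \cref{lem:feas:1-c}.
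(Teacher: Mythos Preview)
Your proof is correct and follows essentially the same route as the paper: both halve the even room capacities to reduce the question to \cref{lem:feas:1-c}. The only cosmetic difference is that you handle odd patient counts uniformly via $\lceil\cdot/2\rceil$, whereas the paper first pads each odd count up to the next even number and then divides by two---the two manoeuvres are equivalent.
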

\begin{proof}
First, assume both conditions are violated, then we have $\nfp +\nmp=\sum_{r \in \R}c_r$, and both $\nfp$ and $\nmp$ are odd.
Then no feasible assignment of patients to rooms exists.
Second, assume Condition \ref{itm:feas:2-4:a} holds true.
Then we construct an equivalent instance by dividing all $\rc_r$, $\nfp$, and $\nmp$ by $2$.
This instance is feasible according to \cref{lem:feas:1-c} and hence also the original one.

Third, assume Condition \ref{itm:feas:2-4:b} holds true and Condition \ref{itm:feas:2-4:a} does not.
Thus, we have $\nfp +\nmp < \sum_{r \in \R}c_r$ and either both $\nfp$ and $\nmp$ are odd or exactly one of them.
If both $\nfp$ and $\nmp$ are odd, it directly follows that $\nfp +\nmp\leq \sum_{r \in \R}c_r -2$.
We then construct an equivalent instance by increasing the number of female and male patients each by one, i.e., $\nfp':=\nfp+1$ and $\nmp':=\nmp+1$.
Then $\nfp'$ and $\nmp'$ are both even with $\nfp'+\nmp'\leq \sum_{r \in \R}c_r$.
This new instance is feasible according to Condition \ref{itm:feas:2-4:a} and hence also the original one.
We proceed analogously if either $\nfp$ or $\nmp$ is odd.
\end{proof}

\subsection{Maximum number of private patients in single rooms}\label{sec:dualBounds}
We define the problem of computing the maximum number $\smax_t$ of private patients who can get a room for themselves in time period $t$ as follows.

\begin{definition}[Private Patient Problem (\ppp)]
Let the total number of female and male patients $\nfp,\nmp\in\N_0$, the number of female and male private patients $\nprivf,\nprivm\in\N_0$, and room capacities $\rc_r\in\N$ for $r\in\R$ be given.
Do there exist four disjoint subsets $S_F\cup S_F^*\cup S_M\cup S_M^*\subseteq \R$ such that 
\begin{enumerate}
    \item all female patients are assigned to rooms $S_F\cup S_F^*$, and all patients assigned to rooms in $S_F^*$ are private patients and alone in their rooms, i.e.,
    \begin{equation}\label{eq:def:smax:a}
     \sum_{r\in S_F} \rc_r + \abs{S_F^*} \geq \nfp \quad\text{and}\quad \abs{S_F^*}\leq \nprivf,
    \end{equation}
    \item all male patients are assigned to rooms $S_M\cup S_M^*$, and all patients assigned to rooms in $S_M^*$ are private patients and alone in their rooms, i.e.,
    \begin{equation}\label{eq:def:smax:b}
     \sum_{r\in S_M} \rc_r + \abs{S_M^*} \geq \nmp \quad\text{and}\quad \abs{S_M^*}\leq \nprivm,
    \end{equation}
    \item the number of private patients who have a room to themselves is maximal, i.e.,
    \begin{equation}\label{eq:def:smax:c}
        \abs{S_F^*}+\abs{S_M^*} \quad\text{is maximal.}
    \end{equation}
\end{enumerate}
\end{definition}

We first take a look at the complexity of \ppp.
\begin{lemma}
\ppp is $\np$-hard and not approximable.
\end{lemma}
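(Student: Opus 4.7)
The plan is to exploit the fact that $\ppp$ strictly generalises the Feasibility Problem of \cref{sec:feas}, whose $\np$-completeness was established by Brandt et al.\ \cite{1perpatient}. Given any Feasibility instance $(\nfp, \nmp, \{\rc_r\}_{r\in\R})$, I would construct the $\ppp$ instance with the same room capacities and patient counts but with $\nprivf := 0$ and $\nprivm := 0$. Because $\abs{S_F^*}\le \nprivf = 0$ and $\abs{S_M^*}\le \nprivm = 0$ then force $S_F^* = S_M^* = \emptyset$, the objective \eqref{eq:def:smax:c} collapses to zero, and the only remaining content of $\ppp$ on this instance is the existence of disjoint $S_F, S_M \subseteq \R$ with $\sum_{r\in S_F}\rc_r\ge \nfp$ and $\sum_{r\in S_M}\rc_r\ge \nmp$. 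Without loss of generality one may take $S_F \cup S_M = \R$, so this is exactly the partition question \eqref{eq:def:feas}, yielding $\np$-hardness of $\ppp$.

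For the inapproximability claim I would invoke the same reduction. Any polynomial-time algorithm with a bounded multiplicative approximation guarantee must, whenever a feasible tuple exists, return one---otherwise its output is not meaningfully comparable to the optimum. Combined with the reduction above, such an algorithm would decide the Feasibility Problem in polynomial time, contradicting $P\neq\np$. Hence no polynomial-time approximation with finite ratio can exist for $\ppp$ unless $P = \np$.

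The main subtlety, rather than a genuine technical obstacle, is interpreting ``not approximable'' with care: on the instances produced by the reduction, the optimum is either zero or undefined, and multiplicative ratios make no sense at zero. The feasibility-detection argument above sidesteps this cleanly without additional machinery. A gap-based strengthening---e.g., a \textsc{Partition}-style reduction yielding $\mathrm{OPT}\in\{0\}\cup\N_{\ge 1}$ together with hardness of distinguishing the two cases---would give the same conclusion with more bookkeeping but no additional strength for the statement as written.
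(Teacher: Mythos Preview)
Your proposal is correct and follows exactly the same approach as the paper: both reduce from the Feasibility Problem by setting $\nprivf=\nprivm=0$, observe that the remaining constraints of \ppp then coincide with~\eqref{eq:def:feas}, and derive inapproximability from the fact that even deciding feasibility is $\np$-hard while the optimum on these instances is zero. Your write-up is more explicit about the feasibility-detection interpretation of ``not approximable,'' but the underlying argument is identical.
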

\begin{proof}
For $\nprivf=\nprivm=0$, \ppp is equivalent to the feasibility problem.
Hence, also \ppp is $\np$-complete.
Since the objective value in this case is $0$, \ppp is not approximable.
\end{proof}

However, \ppp can be solved in polynomial time if the ward has only single and double rooms.

\begin{lemma}
For feasible instances with $\rc_r\in\{1,2\}$, we can compute $\smax_t$ as follows.
Let 
\begin{align*}
    \alpha_t&:=\abs{\R}-\ceil*{\frac{\nfp-\nprivf}{2}} - \ceil*{\frac{\nmp-\nprivm}{2}},\\
    \beta^\mathrm{f}_t&:=\min\left\{\left(\nfp-\nprivf\right)\mod{2},~\nprivf\right\}\in\{0,1\}, \text{ and}\\
    \beta^\mathrm{m}_t&:=\min\left\{\left(\nmp-\nprivm\right)\mod{2},~\nprivm\right\}\in\{0,1\}.
\end{align*}
Then
\begin{align*}
    \smax_t = \begin{cases}
                    \abs{\priv(t)} & \text{if } \alpha_t \geq \abs{\priv(t)},\\
                    \abs{\priv(t)}-1 & \text{if } \alpha_t = \abs{\priv(t)}-1 \text{ and } \beta^\mathrm{f}_t=\beta^\mathrm{m}_t=1,\\
                    2\alpha_t + \beta^\mathrm{f}_t + \beta^\mathrm{m}_t - \abs{\priv(t)} & \text{otherwise}.
            \end{cases}
\end{align*}
\end{lemma}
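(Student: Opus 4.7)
The plan is to recast $\smax_t$ as a two-variable integer program and verify the formula case by case by combining an upper bound with an explicit construction.

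First, I would represent a feasible room assignment by the tuple $(y_F, z_F, y_M, z_M)$, where $y_F$ and $y_M$ count the rooms holding exactly one female respectively one male patient and $z_F, z_M$ the rooms holding two. Substituting $z_F = (\nfp - y_F)/2$ and $z_M = (\nmp - y_M)/2$ into the total-rooms bound yields the equivalent constraint set $0 \le y_F \le \nfp$, $0 \le y_M \le \nmp$, $y_F \equiv \nfp \pmod{2}$, $y_M \equiv \nmp \pmod{2}$, and $y_F + y_M \le N$, where $N := 2\abs{\R} - \nfp - \nmp$. The separate bound on the number of double rooms is slack for feasible instances by the argument of \cref{lem:feas:1-c}. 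Since private patients may always be prioritised when selecting who occupies the single-occupancy rooms, the number of private patients alone for a given $(y_F, y_M)$ equals $\min(y_F, \nprivf) + \min(y_M, \nprivm)$, so $\smax_t$ is the maximum of this expression over the polytope above.

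Setting $f := (\nfp - \nprivf) \bmod 2$ and $m := (\nmp - \nprivm) \bmod 2$, a direct computation gives the identity $\alpha_t = \tfrac{1}{2}(N + \abs{\priv(t)} - f - m)$, which translates the case distinction into ranges of $N$. In Case~1 the hypothesis $\alpha_t \ge \abs{\priv(t)}$ becomes $N \ge \abs{\priv(t)} + f + m$, and the assignment $y_F = \nprivf + f$, $y_M = \nprivm + m$ is feasible and attains the trivial maximum $\abs{\priv(t)}$.

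Case~2 is where parity has visible effect. Under its hypotheses $N = \abs{\priv(t)}$, $f = m = 1$, $\nprivf, \nprivm \ge 1$, any assignment hitting the trivial upper bound $N$ would satisfy $\min(y_F, \nprivf) + \min(y_M, \nprivm) = y_F + y_M = N$, which forces $y_F \le \nprivf$, $y_M \le \nprivm$ and $y_F + y_M = \nprivf + \nprivm$, that is $y_F = \nprivf$; but then $y_F \equiv \nfp \not\equiv \nprivf \pmod{2}$ is violated. Hence $\smax_t \le N - 1$, and this bound is attained by $y_F = \nprivf - 1$, $y_M = \nprivm + 1$: the parities match because $f = m = 1$, $y_F \ge 0$ by $\nprivf \ge 1$, and $y_M \le \nmp$ because $m = 1$ together with $\nprivm \le \nmp$ forces $\nmp \ge \nprivm + 1$.

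For Case~3 the formula simplifies to $N - (f - \beta^{\mathrm{f}}_t) - (m - \beta^{\mathrm{m}}_t)$. If $\nprivf = 0$ (the symmetric case $\nprivm = 0$ is analogous) this reduces to $N - f$, which matches the trivial upper bound and is attained by $y_F = f$, $y_M = N - f$; here $N - f \le \nprivm$ follows from the failure of Case~1. If $\nprivf, \nprivm \ge 1$ the formula equals $N$, and one must exhibit $(y_F, y_M)$ with $y_F + y_M = N$, $y_F \le \nprivf$, $y_M \le \nprivm$ and correct parities. This existence step is the main technical obstacle; I would dispatch it by a short case split on $f + m \in \{0, 1, 2\}$, using that the failure of Cases~1 and~2 together with the parity identity $N - \abs{\priv(t)} \equiv f + m \pmod{2}$ forces $N \le \abs{\priv(t)} - 1$ in every subcase, while the hypothesis $\nprivf, \nprivm \ge 1$ provides the slack to slide $y_F$ by $\pm 1$ whenever a parity conflict with the caps would otherwise arise.
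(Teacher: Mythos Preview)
Your proposal is correct and takes a genuinely different route from the paper. The paper argues constructively: it first replaces all single rooms by double rooms (claiming this leaves $\smax_t$ unchanged), then packs the non-private patients into the minimum number $\lceil(\nfp-\nprivf)/2\rceil+\lceil(\nmp-\nprivm)/2\rceil$ of rooms, declares the remaining $\alpha_t$ rooms plus the $\beta^{\mathrm f}_t+\beta^{\mathrm m}_t$ half-filled rooms to be the beds available for private patients, and reads off the answer by counting leftover beds. By contrast, you reduce everything to a clean two-variable integer program in $(y_F,y_M)$, establish the algebraic identity $\alpha_t=\tfrac12(N+|\priv(t)|-f-m)$, and then verify each case by pairing an explicit feasible point with a matching upper bound. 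Your approach buys rigour: the paper never actually argues optimality of its greedy packing (why it is never better to put a private and a non-private patient together from the start), nor does it justify the ``empty beds $=$ private-alone'' bookkeeping in the generic case, whereas your upper bounds via $\min(y_F,\nprivf)+\min(y_M,\nprivm)\le y_F+y_M\le N$ and $\le\nprivf+\nprivm$ are watertight. The paper's approach buys intuition: the quantities $\alpha_t,\beta^{\mathrm f}_t,\beta^{\mathrm m}_t$ acquire immediate operational meaning (leftover rooms, half-filled rooms), which your identity recovers only after the fact. One remark: your claim that the double-room bound is ``slack'' deserves one extra sentence---what you really use is that the objective is nondecreasing in $y_F,y_M$, so any optimum can be pushed up to $y_F+y_M=\min(N,\nfp+\nmp)$, at which point $z_F+z_M\le R_2$ follows from feasibility; this is the analogue of the paper's ``treat singles as doubles'' step.
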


\begin{proof}
For feasible instances with single and double rooms, we can treat all single rooms as double rooms since this does not affect the number of private patients who can get a room for themselves.
Therefore, let us consider a feasible instance of \ppp with only double rooms, i.e., equation \eqref{eq:feas:double} holds true.
We now have to assign at least
\[\ceil*{\frac{\nfp-\nprivf}{2}} +\ceil*{\frac{\nmp-\nprivm}{2}}
\]
rooms to non-private patients.
Since we aim to maximize the number of private patients who are alone in a room, we assign exactly that many rooms to non-private patients.
If the number of free remaining rooms $\alpha_t$ is greater or equal to the number of unassigned private patients, i.e.,
$\alpha_t \geq \abs{\priv(t)}$,
then every private patient can get a room for themselves, i.e., \[\smax_t=\abs{\priv}.\]

Otherwise, after assigning all non-private patients, we have $\alpha_t$ empty double rooms as well as potentially one ($\beta^\mathrm{f}_t$) free bed in a double-bed room where a non-private female patient is present which we can assign to a private female patient (if at least one is present), respectively for male patients ($\beta^\mathrm{m}_t$).
This results in a total of
\[\gamma_t:=2\alpha_t + \beta^\mathrm{f}_t + \beta^\mathrm{m}_t\]
available beds for private patients.
If $\beta^\mathrm{f}_t=0$ or $\beta^\mathrm{m}_t=0$ or $\alpha_t \leq \abs{\priv(t)}-2$, then
the difference of $\beta_t$ and the total number of private patients gives us the of number of empty beds, i.e., the number of private patients who can get a room for themselves because the potentially free beds in rooms with exactly one non-private patients will always be used in this case, i.e.,
\[
    \smax_t=\gamma_t- \abs{\priv(t)}.
\]
However, if both $\beta^\mathrm{f}_t=1$ and $\beta^\mathrm{m}_t=1$ and exactly $\abs{\priv(t)}=\alpha_t+1$ private patients need a room, then exactly one private patient will be placed in a room together with a non-private patient, i.e.,
\[\smax_t=\abs{\priv(t)}-1.\]
Overall, we achieve the stated formula for computing $\smax_t$.
\end{proof}

Knowing the maximum number $\smax_t$ of private patients who can get a single room in time period $t$ allows us to assess the trade-off between $\fpriv$ and $\ftrans$ or other objectives that occur in practice, e.g. hosting all patients who need immediate care.
Using the exact computation of $\smax_t$, we know that their sum over all time periods $t\in\T$ is a tight upper bound on the total objective value for $\fpriv$, i.e.,
\begin{equation} \label{L}
\fpriv\leq \smax :=\sum_{t \in \T} \smax_t.
\end{equation}
This bound can always be achieved as long as arbitrary many transfers may be used.

\section{Comparison of different IP-formulations}
\label{sec:ip:general}
In this section, we propose and compare different IP-formulations for PRA. 
Since we present multiple formulations for some of the conditions, we explain every constraint individually and then state for every IP which of the constraints are used.
To reduce the total number of IP variants, we first compare different formulations for minimizing the total number of transfers.
Second, we use the best performing LP formulation for minimizing transfers and then compare different extensions for incorporating single room requests of private patients.
Third, we compare different IP-formulations for maximizing $\fpriv$ without using transfers.

\subsection{Minimize transfers only}
\label{subsec:compstudy:mintransonly}
To model the assignment of patients to rooms as well as the minimization of transfers, we use the following binary variables:
\begin{align}
x_{prt}&=\begin{cases}
            1,  &\text{if patient }  p \text{ is assigned to room } r \text{ in time period } t,\\
            0,  &\text{otherwise,}
        \end{cases}\\
\delta_{prt}&=\begin{cases}
            1,  &\text{if patient } p \text{ is transferred from room } r \text{ to another room}\\ &\text{after time period } t\\
            0,  &\text{otherwise.}
        \end{cases}
\end{align}
We then model the total number of transfers as the sum of all variables $\delta$ together with all altered pre-fixed assignments
\begin{equation}\label{eq:xprt:transobj}
    \transobj = \sum_{t\in\T}\sum_{p\in\P(t)}\sum_{r\in\R} \delta_{prt} + \abs{\rpold}-\sum_{(r,p)\in\rpold} x_{pr1}.
\end{equation}

Regarding the constraints, we first ensure that all patients are assigned to rooms for every time period of their stay:
\begin{equation}
\label{eq:everypatientAroom}
\sum_{r\in\R} x_{prt} = 1   \quad \forall t\in\T, p\in\P(t).
\end{equation}
Second, we ensure that the room capacity is respected via
\begin{equation}\label{eq:capacity}
    \sum_{p\in\P(t)} x_{prt} \leq \rc_r   \quad \forall t\in\T, r\in\R.
\end{equation}
Third, 
to model sex separation, we introduce two additional sets of binary variables
\begin{align}
    g_{rt} &=\begin{cases}
            1,  &\text{if there is a female patient assigned to room } r \text{ in time}\\ &\text{period } t,\\
            0,  &\text{otherwise,}
        \end{cases}\label{var:grt}\\
    m_{rt} &=\begin{cases}
            1,  &\text{if there is a male patient assigned to room } r \text{ in time}\\ &\text{period } t,\\
            0,  &\text{otherwise.}\label{var:mrt}
        \end{cases}
\end{align}
We then the ensure sex separation via
\begin{align}
    x_{prt}&\leq g_{rt} \quad&& \forall t\in\T,\ p\in\fp(t),\ r\in\R,\label{eq:Sex:grt}\\
    x_{prt}&\leq m_{rt} && \forall t\in\T,\ p\in\mp(t),\ r\in\R,\label{eq:Sex:mrt}\\
    g_{rt}+m_{rt}&\leq 1 && \forall t\in\T,\ r\in\R.\label{eq:Sex:gm}
\end{align}
Using $m_{rt}\leq 1-g_{rt}$ we can remove variable $m_{rt}$ and replace constraints \cref{eq:Sex:mrt,eq:Sex:gm} with
\begin{equation}\label{eq:Sex:grt:only}
    x_{prt}\leq 1-g_{rt} \quad \forall t\in\T,\ p\in\mp(t),\ r\in\R.
\end{equation}
Instead of modeling capacity and sex separation constraints separately, we can also combine them and use
\begin{align}
    \sum_{p\in\fp(t)} x_{prt}&\leq \rc_r g_{rt} \quad&& \forall t\in\T,\ r\in\R,\label{eq:Capacity+Sex:grt}\\
    \sum_{p\in\mp(t)} x_{prt}&\leq \rc_r m_{rt} && \forall t\in\T,\ r\in\R,\label{eq:Capacity+Sex:mrt}
\end{align}
instead of \cref{eq:capacity,eq:Sex:grt,eq:Sex:mrt}.
Or, if we omit variable $m_{rt}$, we use 
\begin{equation}
    \sum_{p\in\mp(t)} x_{prt}\leq \rc_r (1-g_{rt}) \quad \forall t\in\T,\ r\in\R,\label{eq:Capacity+Sex:grt:only}
\end{equation}
instead of of \cref{eq:Sex:gm,eq:Capacity+Sex:mrt}.
Fourth, we count the transfers via
\begin{equation}\label{eq:xprt:delta}
    x_{prt} - x_{pr(t+1)} \leq \delta_{prt} \quad \forall r\in\R,\ p\in\P, \arr_p\leq t < \dis_p-1.
\end{equation}
We compare the performance of the following four IP-formulations to
investigate the usage of variables $m_{rt}$, as well as the integration of capacity and sex separation constraints.

\begin{enumerate}[(A)]
    \item  $\min \transobj$ s.t. \cref{eq:everypatientAroom,eq:capacity,eq:Sex:grt,eq:Sex:mrt,eq:Sex:gm,eq:xprt:delta}   \label{IP:A}
    \item  $\min \transobj$ s.t. \cref{eq:everypatientAroom,eq:capacity,eq:Sex:grt,eq:Sex:grt:only,eq:xprt:delta}
    \item  $\min \transobj$ s.t. \cref{eq:everypatientAroom,eq:Sex:gm,eq:Capacity+Sex:grt,eq:Capacity+Sex:mrt,eq:xprt:delta}
    \item  $\min \transobj$ s.t. \cref{eq:everypatientAroom,eq:Capacity+Sex:grt,eq:Capacity+Sex:grt:only,eq:xprt:delta} \\ \label{IP:D}
\end{enumerate}

\label{sec:transfersonlystudy}
All IPs were implemented in \emph{python} 3.10.4 and solved using \emph{Gurobi 10.0.0}.
All simulations were done on the \href{https://www.itc.rwth-aachen.de/cms/IT-Center/Services/Servicekatalog/Hochleistungsrechnen/~hisv/RWTH-Compute-Cluster/?lidx=1}{RWTH High Performance Computing Cluster} using CLAIX-2018-MPI with Intel Xeon Platinum 8160 Processors “SkyLake” (2.1 GHz, 32 CPUs per task, 3.9 GB per CPU).
The code can be found at \gitlink.

For testing, we used 62 real-world instances provided by the \emph{RWTH Aachen University Hospital (UKA)}, each spanning a whole year, and a time limit of 12h.
We performed consistency checks on the patient data ensuring valid input data: patients with missing information on arrival or discharge and patients with $\arr_p=\dis_p$ were dropped from the data and for patients whose registration was noted after their arrival, we set the registration date to the arrival date.
All instances together still contain more than $53.000$ patient stays.
For every instance, the number of rooms and their capacities are given as well as the patients' arrival, departure, and registration dates, their sex, unique Patient-ID and information on the insurance status.
Note that the data is subject to non-disclosure and as such is not provided together with the code.

The results of comparing "transfers only" formulations are depicted in \cref{fig:A-D:runtime:43200}. They show that the integration of capacity and sex separation constraints decreases computation time. Similarly, removing the variable $m_{rt}$ also decreases computation time.
In general, instances were either solved to optimality with objective value 0 or resulted in a MIPGap of 100\% after 12 hours.
\begin{figure}[tb!]
    \centering
    \includegraphics[width=0.7\textwidth]{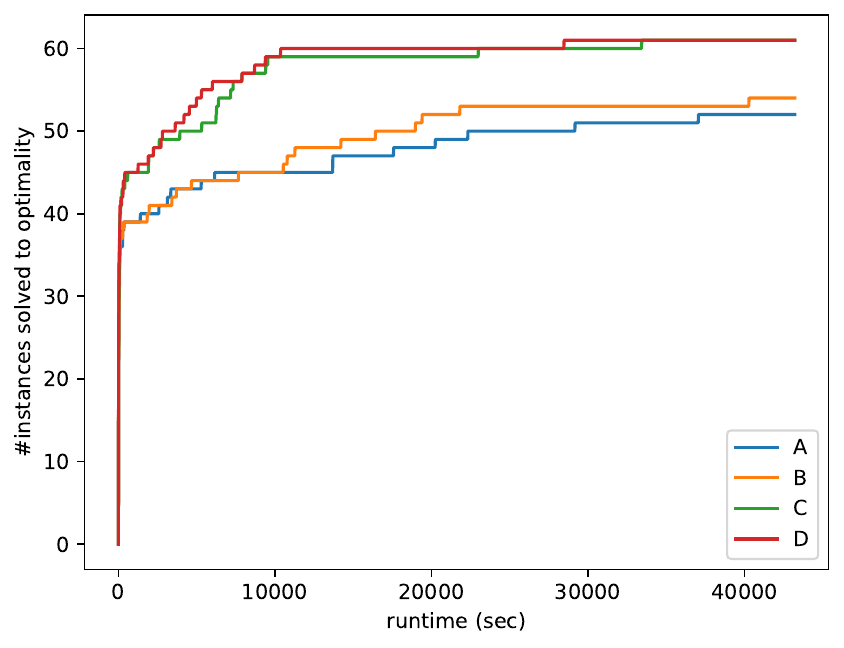}
    \caption{Comparison of IPs A - D using 62 real-life instances, after 12 h 61 instances were solved to optimality by IPs C and D with objective value 0}
    \label{fig:A-D:runtime:43200}
\end{figure}

\subsection{Integration of single room constraints}
\label{subsec:IP:single-room}
To incorporate single room request,
we define binary variables encoding whether a private patient gets a single room via
\begin{equation}
s_{prt} =\begin{cases}
        1,  &\text{if } p \text{ is alone in room } r \text{ in time period } t,\\
        0,  &\text{otherwise.}
    \end{cases}
\label{var:sprt}
\end{equation}
Thus, the total number of time periods that private patients are assigned to single rooms is given by
\begin{equation}
\label{eq:sprt:privobj}
    \privobj = \sum_{t\in\T}\sum_{p\in\priv(t)}\sum_{r\in\R} s_{prt}.
\end{equation}
Then, we can model the single room constraints via
\begin{align}
    s_{prt} &\leq x_{prt}   &&\forall t\in\T,\ p\in\priv(t),\ r\in\R,\label{eq:single:s}\\
    \rc_r s_{prt} + \sum_{q\in\P(t)\setminus\{p\}} x_{qrt} &\leq \rc_r   &&\forall t\in\T,\ p\in\priv(t),\ r\in\R.\label{eq:single:c}
\end{align}
Alternatively to \cref{eq:single:c}, we can also integrate the single room constraints with the sex separation and capacity constraints \cref{eq:Capacity+Sex:grt,eq:Capacity+Sex:mrt} via
\begin{align}
    \sum_{p\in\fp(t)} x_{prt} + \sum_{p\in\fp\cap\priv(t)} (\rc_r -1)s_{prt} &\leq \rc_r g_{rt} &&\forall t\in\T,\ r\in\R\label{eq:single:grt}\\
    \sum_{p\in\mp(t)} x_{prt} + \sum_{p\in\mp\cap\priv(t)} (\rc_r -1)s_{prt} &\leq \rc_r (1-g_{rt}) &&\forall t\in\T,\ r\in\R\label{eq:single:grt:only}.
\end{align}

We compare the performance of the following LP-formulations that integrate single room requests based on the previous results
\begin{enumerate}[(A)]
\setcounter{enumi}{4}
    \item $\max~(-\transobj,\privobj)$ s.t. constraints of \ref{IP:D}, \cref{eq:single:s,eq:single:c}\label{IP:E}
    \item $\max~(\privobj,-\transobj)$ s.t. constraints of \ref{IP:D}, \cref{eq:single:s,eq:single:c}
\stepcounter{enumi}
    \item $\max~(-\transobj,\privobj)$ s.t. \cref{eq:everypatientAroom,eq:xprt:delta,eq:single:s,eq:single:grt,eq:single:grt:only}\label{IP:H}
    \item $\max~(\privobj,-\transobj)$ s.t. \cref{eq:everypatientAroom,eq:xprt:delta,eq:single:s,eq:single:grt,eq:single:grt:only}\label{IP:I}.
\end{enumerate}
The objectives' order determines their priority in optimisation, i.e., $\max (-\transobj,\privobj)$ means that first $\transobj$ is minimised and then $\privobj$ is maximised.

The formulations for IPs \ref{IP:E} to \ref{IP:I} were evaluated on the same computational setup as in \cref{sec:transfersonlystudy}. The results are given in \cref{fig:E-J:runtime:43200}.
\begin{figure}[bt!]
    \centering
    \includegraphics[width=0.7\textwidth]{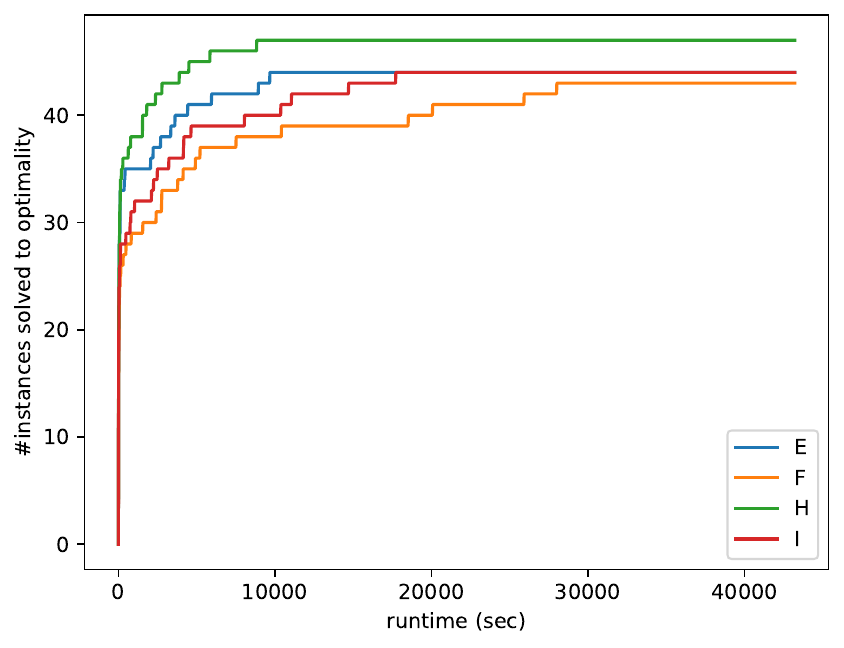}
    \caption{Comparison of IPs E - H using 62 real-life instances, maximum runtime 12h}
    \label{fig:E-J:runtime:43200}
\end{figure}
We see that the decisive factor is not the set of constraints but the objective function.
Minimizing the number of transfers first $\max (-\transobj,\privobj)$ performs significantly better than maximizing the private patients first.
However, it is noticeable that the second set of constraints performs overall better than the first set. 
We further observed that, when optimizing $\privobj$, the solver frequently finds an optimal solution quickly but then requires extended time to prove optimality.
Therefore, we use our combinatorial insights the help the solver prove optimality in this case.

If maximizing $\privobj$ has highest priority, we can use the combinatorial insights from \cref{sec:combi}
and fix the number of private patients in single rooms for time period $t$ to $\smax_t$, i.e.,
\begin{equation}\label{eq:fix:smax_t}
    \sum_{p\in\priv(t)}\sum_{r\in\R} s_{prt} \geq \smax_t \quad \forall t\in\T
\end{equation}
instead of using the biobjective approach.
Hence, we also evaluate the resulting IP
\begin{enumerate}[(A)]
\setcounter{enumi}{10}
    \item $\min \transobj$ s.t. constraints of \ref{IP:H}. \cref{eq:fix:smax_t}\label{model:fix:smax}\label{IP:KK}
\end{enumerate}

We compare the respective IP's performance to the one of \ref{IP:H} and \ref{IP:I}.
\begin{figure}[tb!]
    \centering
    \includegraphics[width=0.7\textwidth]{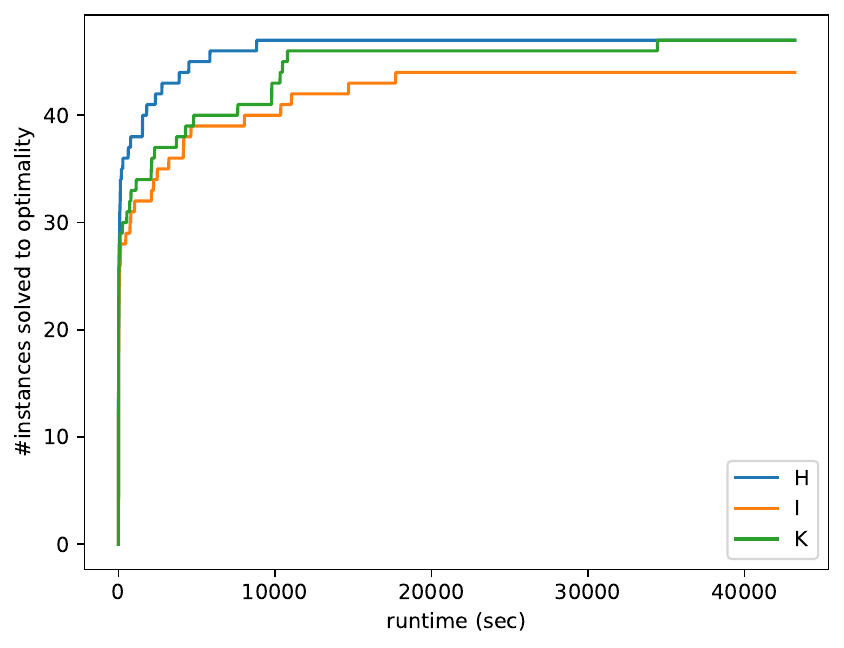}
    \caption{Performance of IP K using 62 real-life instances, maximum runtime 12h}
    \label{fig:KL:runtime:43200}
\end{figure}
\cref{fig:KL:runtime:43200} shows that IP \ref{IP:KK} clearly outperforms IP \ref{IP:I},
however, its performance is not as good as the one of IP \ref{IP:H}.

\subsection{IP-formulation without transfers}\label{sec:ip:notransfer}
The objective values computed in our computational experiments in \cref{subsec:compstudy:mintransonly,subsec:IP:single-room} showed that in many instances no transfers are necessary throughout the entire planning period of one year.
Therefore, we propose and compare in this section IP formulations where transfers are prohibited by construction while $\privobj$ is maximized.
We compare again three different levels of constraint integration.
Furthermore, we evaluate whether it is faster to solve the optimization problem with objective function $\privobj$ or the feasibility problem where $\privobj=\smax$ is fixed.

We use binary variables
\begin{align}
x_{pr}&=\begin{cases}
            1,  &\text{if patient }  p \text{ is assigned to room } r \text{ for their stay},\\
            0,  &\text{otherwise,}
        \end{cases}
\end{align}
to model the assignment of patients to rooms togehter with the previously introduced variables $s_{prt}$ as in \eqref{var:sprt}, and variables $g_{rt}$ as in \eqref{var:grt}.

Regarding the constraints, we first ensure that all patients are assigned to rooms in every time period of their stay: 
\begin{equation}
\label{eq:xpr:everypatientAroom}
    \sum_{r\in\R} x_{pr} = 1   \quad \forall p\in\P.\\
\end{equation}
Second, we ensure that the room capacity is respected via
\begin{equation}\label{eq:xpr:capacity}
    \sum_{p\in\P(t)} x_{pr} \leq \rc_r   \quad \forall t\in\T, r\in\R.
\end{equation}
Third, we ensure sex separation via
\begin{align}
    x_{pr}&\leq g_{rt} \quad&& \forall t\in\T,\ p\in\fp(t),\ r\in\R,\label{eq:xpr:Sex:grt}\\
    x_{pr}&\leq (1-g_{rt}) && \forall t\in\T,\ p\in\mp(t),\ r\in\R\label{eq:xpr:Sex:grt:only}.
\end{align}
Instead of modeling capacity and sex separation constraints separately, we can also combine them and use
\begin{align}
    \sum_{p\in\fp(t)} x_{pr}&\leq \rc_r g_{rt} \quad&& \forall t\in\T,\ r\in\R,\label{eq:xpr:Capacity+Sex:grt}\\
    \sum_{p\in\mp(t)} x_{pr}&\leq \rc_r (1-g_{rt}) && \forall t\in\T,\ r\in\R,\label{eq:xpr:Capacity+Sex:grt:only}
\end{align}
instead of \cref{eq:xpr:capacity,eq:xpr:Sex:grt,eq:xpr:Sex:grt:only}.
Fourth, we model the single room constraints via
\begin{align}
    s_{prt} &\leq x_{pr}   &&\forall t\in\T,\ p\in\priv(t),\ r\in\R,\label{eq:xpr:single:s}\\
    \rc_r s_{prt} + \sum_{q\in\P(t)\setminus\{p\}} x_{qr} &\leq \rc_r   &&\forall t\in\T,\ p\in\priv(t),\ r\in\R.\label{eq:xpr:single:c}
\end{align}
Alternatively to \cref{eq:xpr:single:c}, we can also integrate the single room constraints with the sex separation and capacity constraints \cref{eq:xpr:Capacity+Sex:grt,eq:xpr:Capacity+Sex:grt:only} via
\begin{align}
    \sum_{p\in\fp(t)} x_{pr} + \sum_{p\in\fp\cap\priv(t)} (\rc_r -1)s_{prt} &\leq \rc_r g_{rt} &&\forall t\in\T,\ r\in\R\label{eq:xpr:single:grt},\\
    \sum_{p\in\mp(t)} x_{pr} + \sum_{p\in\mp\cap\priv(t)} (\rc_r -1)s_{prt} &\leq \rc_r (1-g_{rt}) &&\forall t\in\T,\ r\in\R \label{eq:xpr:single:grt:only}.
\end{align}
Last, we ensure that any pre-fixed assignments are respected:
\begin{equation}\label{eq:xpr:prefix}
    x_{pr} = 1 \quad \forall (p,r)\in\rpold.
\end{equation}

We then compare the following IP-formulations to find the best performing constraint set.
\begin{enumerate}[(A)]
\setcounter{enumi}{12}
    \item $\max \privobj$ s.t. \cref{eq:xpr:everypatientAroom,eq:xpr:capacity,eq:xpr:Sex:grt,eq:xpr:Sex:grt:only,eq:xpr:single:s,eq:xpr:single:c,eq:xpr:prefix}\label{IP:M}
    \item $\max \privobj$ s.t. \cref{eq:xpr:prefix,eq:xpr:everypatientAroom,eq:xpr:Capacity+Sex:grt,eq:xpr:Capacity+Sex:grt:only,eq:xpr:single:c,eq:xpr:single:s}\label{IP:N}
    \item $\max \privobj$ s.t. \cref{eq:xpr:everypatientAroom,eq:xpr:single:s,eq:xpr:single:grt,eq:xpr:single:grt:only,eq:xpr:prefix}\label{IP:O}
    \item $\max 0$ s.t. constraints of \ref{IP:O}, \cref{eq:fix:smax_t}\label{IP:P}\\
\end{enumerate}

The formulations for IPs \ref{IP:M} to \ref{IP:P} were evaluated on the same computational setup as in \cref{sec:transfersonlystudy}.
The results show the dominance of IP \ref{IP:P} over the other IPs, cfg. \cref{fig:MNOP:runtime:43200}.
\begin{figure}[tb!]
    \centering
    \includegraphics[width=0.7\textwidth]{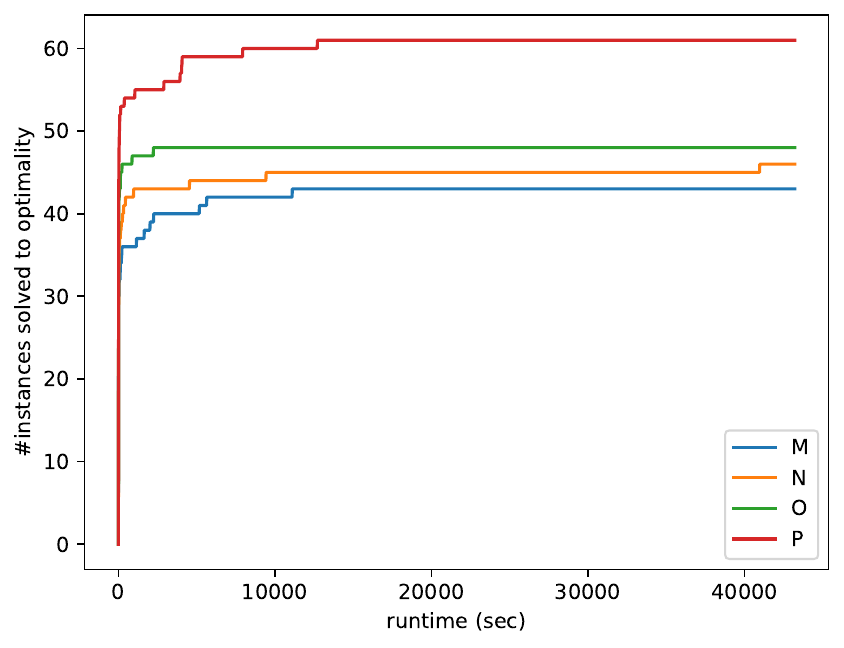}
    \caption{Comparison of IPs \ref{IP:M}-\ref{IP:P} using 62 real-life instances}
    \label{fig:MNOP:runtime:43200}
\end{figure}
However, it strongly depends on the use case whether IP \ref{IP:P} is the best one to use as, naturally, it is feasible in fewer instances than IP \ref{IP:O}.
With our real-life instances, \ref{IP:P} was feasible in $72.5$\% whereas \ref{IP:O} was feasible in $97.75\%$.
However, due to the fast runtime of IP \ref{IP:P}, cfg. \cref{fig:MNOP:runtime:120}, it may be worthwhile to check first whether IP \ref{IP:P} is feasible before switching to \ref{IP:O}.
\begin{figure}[h!]
    \centering
    \includegraphics[width=0.7\textwidth]{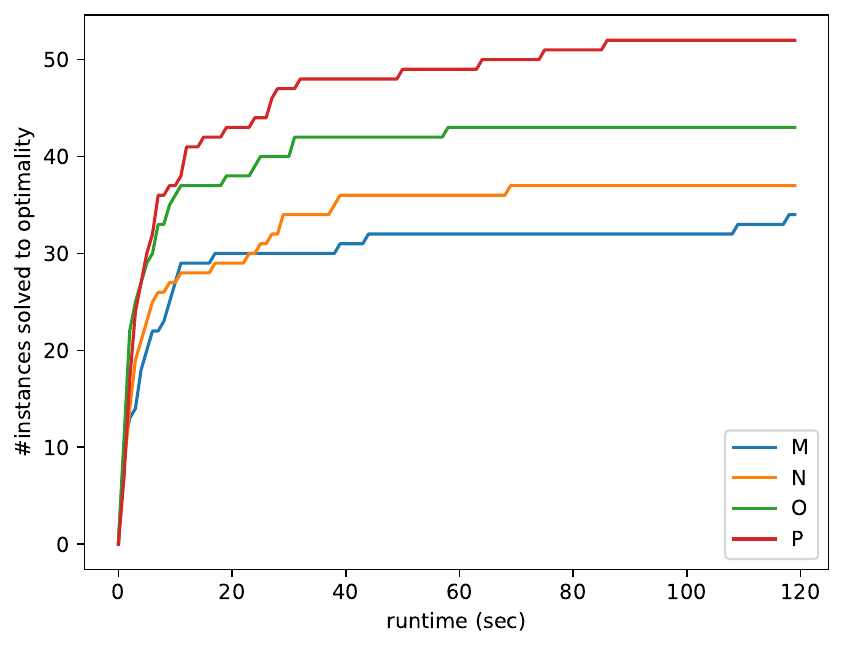}
    \caption{IP \ref{IP:P} solves 52 instances in $<100$ sec}
    \label{fig:MNOP:runtime:120}
\end{figure}

\section{Dynamic PRA}\label{sec:modelcombi}
As Dynamic PRA, we understand PRA with a rolling time horizon similar to the definition in \cite{Ouelhadj2008}.
As rescheduling is frequently done in practice, this approach relates more closely to the real life problem than the static version.
In this section, we describe how we combine four IP models and our combinatorial insights to efficiently solve the dynamic PRA by exploiting the IPs' different run-times.

Here, for every patient we are also given a registration time period so that the set $\P$ of all (known) patients is updated each time period.
For every time period $t\in\T$, all known patients, i.e., patients whose registration dates are before or equal to $t$, are assigned to rooms.
All room assignments of the current time period are then stored in the set $\rpold$.
We assume that $\rpold$ does not contain irrelevant data, i.e., discharged patients are deleted immediatly to ensure the correct computation of $\transobj$.
Hence, $\rpold$ is updated after every iteration just like the patient set $\P$.

The iterative nature of the dynamic PRA allows us to introduce a variant of IPs \ref{IP:P} and \ref{IP:O} where transfers are not entirely forbidden, but only changes to the current room assignment are allowed.
We call this concept \textit{same-day transfers} and formulate it as
\begin{enumerate}[(A*)]
\setcounter{enumi}{14}
    \item $\max~(\privobj,\sum_{(r,p)\in\rpold} x_{pr}$) s.t. \cref{eq:xpr:everypatientAroom,eq:xpr:single:s,eq:xpr:single:grt,eq:xpr:single:grt:only}\label{IP:Ostar}
    \item $\max~\sum_{(r,p)\in\rpold} x_{pr}$ s.t. constraints of \ref{IP:Ostar}, \cref{eq:fix:smax_t}.\label{IP:Pstar}
\end{enumerate}

For our algorithm, we combine the IPs \ref{IP:H}, \ref{IP:Ostar}, \ref{IP:P}, and \ref{IP:Pstar} and our combinatorial insights as follows.
First, we check combinatorially whether the instance is feasible
since we observed that the combinatorial feasibility check is significantly faster than building a respective IP (using gurobipy), not to mention solving it.
Second, we use the no-transfers formulation IP \ref{IP:P}.
Note that here, we make use of our second combinatorial insight, i.e., the computation of $\smax$.
If IP \ref{IP:P} is infeasible, we solve the instance again using the same-day transfer formulation IP~\ref{IP:Pstar}.
If IP~\ref{IP:Pstar} is also infeasible, we use IP \ref{IP:Ostar} maximising the number of private patients who get their own room while minimising the number of transfers in the first time period.
If again, no feasible solution for \ref{IP:Ostar} is found within $20$ seconds, we solve the instance using IP \ref{IP:H} which allows arbitrary many transfers and is therefore always feasible.

After successful computation, we fix all patient-room assignments for patients that are in hospital in the current time period by adding them to set $\rpold$ while removing outdated ones.
We then update the patient set and continue analogously with the next time period.
A visualization of this algorithm is provided in \cref{fig:modelcombi:dyn}.
\begin{figure}[tb!]
    \centering
    \scalebox{0.5}{
        \begin{tikzpicture}[node distance=2cm,every label/.style={align=left}]
    
    \tikzstyle{startstop} = [rectangle, rounded corners, minimum width=2.5cm, minimum height=1cm,text centered, draw=black, fill=RWTHred!50]

    \tikzstyle{process} = [rectangle, minimum width=1.5cm, minimum height=1cm, text centered, draw=black, fill=RWTHorange]
    
    \tikzstyle{decision} = [diamond, minimum width=2cm, minimum height=2cm, text centered, draw=black, fill=RWTHmaygreen]
    
    \tikzstyle{YN} = [minimum width=0cm, minimum height=0cm, text centered]

    \tikzstyle{arrow} = [thick,->,>=stealth]
        
    \node (initialisation) [startstop] {Initialisation};
    
    \node (feasibility) [decision, below of=initialisation, xshift=-1cm] {Feasible?};
    \node (no0) [YN,below of=feasibility, yshift=1.6cm,xshift=1.4cm] {No};
    \node (yes0d) [YN,below of=feasibility, yshift=1cm,xshift=0.6cm] {Yes};
     \draw [arrow] (initialisation) -- (feasibility);
    
    \node (N) [process, below of=feasibility] {\Large\ref{IP:P}};
    \node (N_eval) [decision, right of=N, xshift=1cm] {Feasible?};
    \node (Inc) [decision, below of=N_eval,yshift=-1cm] {\large$t=\nT?$};
    
    \draw [arrow] (feasibility) -- (N);
    \draw [arrow] (N) -- (N_eval);
    \draw [arrow] (N_eval) -- (Inc);
    
    \node (no1) [YN,below of=N_eval, yshift=1.6cm,xshift=1.4cm] {No};
    \node (yes1) [YN,below of=N_eval, yshift=0.5cm,xshift=0.4cm] {Yes};
    
    \node (no3) [YN,below of=Inc, yshift=1.6cm,xshift=-1.4cm] {No};
    \node (yes3) [YN,below of=Inc, yshift=0.5cm,xshift=0.4cm] {Yes};
    
    \node (C) [process, right of=N_eval, xshift=1cm] {\Large\ref{IP:Pstar}};
    \node (C_eval) [decision, right of=C, xshift=1cm] {Feasible?};
    
    \draw [arrow] (N_eval) -- (C);
    \draw [arrow] (C) -- (C_eval);
    \draw [arrow] (C_eval) |- (Inc);
    
    \node (no2) [YN,below of=C_eval, yshift=1.6cm,xshift=1.4cm] {No};
    \node (yes2) [YN,below of=C_eval, yshift=0.5cm,xshift=0.4cm] {Yes};
    
    \node (O) [process, right of=C_eval, xshift=1cm] {\Large\ref{IP:Ostar}};
    \node (O_eval) [decision, right of=O, xshift=1cm] {Feasible?};
    
    \draw [arrow] (C_eval) -- (O);
    \draw [arrow] (O) -- (O_eval);
    \draw [arrow] (O_eval) |- (Inc);
    
    \node (noO2) [YN,below of=O_eval, yshift=1.6cm,xshift=1.4cm] {No};
    \node (yesO2) [YN,below of=O_eval, yshift=0.5cm,xshift=0.4cm] {Yes};
    
    \node (E) [process, right of=O_eval, xshift=1cm] {\Large\ref{IP:H}};
    \node (E_eval) [decision, right of=E, xshift=1cm] {Feasible?};
    \node (INF) [startstop, right of=E_eval, yshift=-4.8cm] {Terminate};
    
    \node (no4) [YN,below of=E_eval, yshift=1.6cm,xshift=1.4cm] {No};
    \node (yes4) [YN,below of=E_eval, yshift=0.5cm,xshift=0.4cm] {Yes};
    
    \draw [arrow] (O_eval) -- (E);
    \draw [arrow] (E) -- (E_eval);
    \draw [arrow] (E_eval) |- (Inc);
    \draw [arrow] (E_eval) -| (INF);
    \draw [arrow] (Inc) |- (INF);
    \draw [arrow] (feasibility) -| (INF);
    
    \draw [arrow] (Inc) -| (N);
    \draw[dashed] (16,0) rectangle (22,-6);

    \node (IncLabel1) [YN,below of=Inc, yshift=1.6cm,xshift=-3.2cm] {Update $\P,\rpold$};
    \node (IncLabel2) [YN,below of=Inc, yshift=1.2cm,xshift=-3.2cm] {$t \pluseq 1$};

\end{tikzpicture}
    }
    \caption{Algorithm for dynamic PRA, the dotted part was never reached in our computational study}
    \label{fig:modelcombi:dyn}
\end{figure}
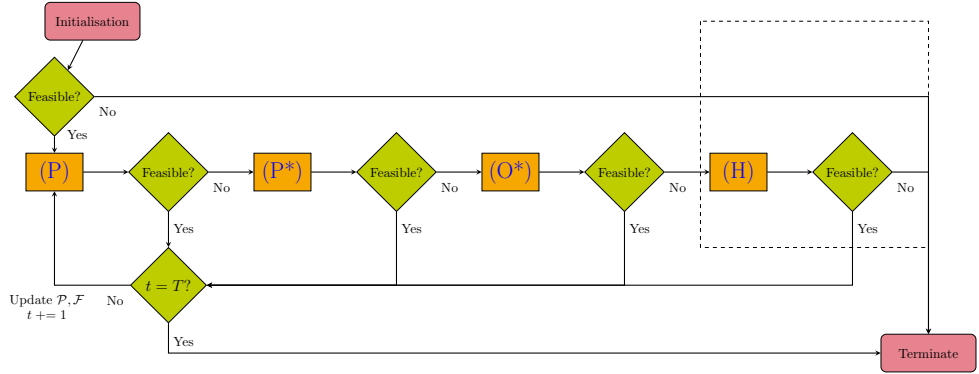

We evaluate our algorithm again on 62 real-world instances spanning a whole year.
As a result we get that all instances use 365 iterations of the algorithm and all are solved within less than 600 seconds per year, cfg.~\cref{fig:dyn:runtime}.
\begin{figure}[tbh!]
    \centering
    \includegraphics[width=0.7\textwidth]{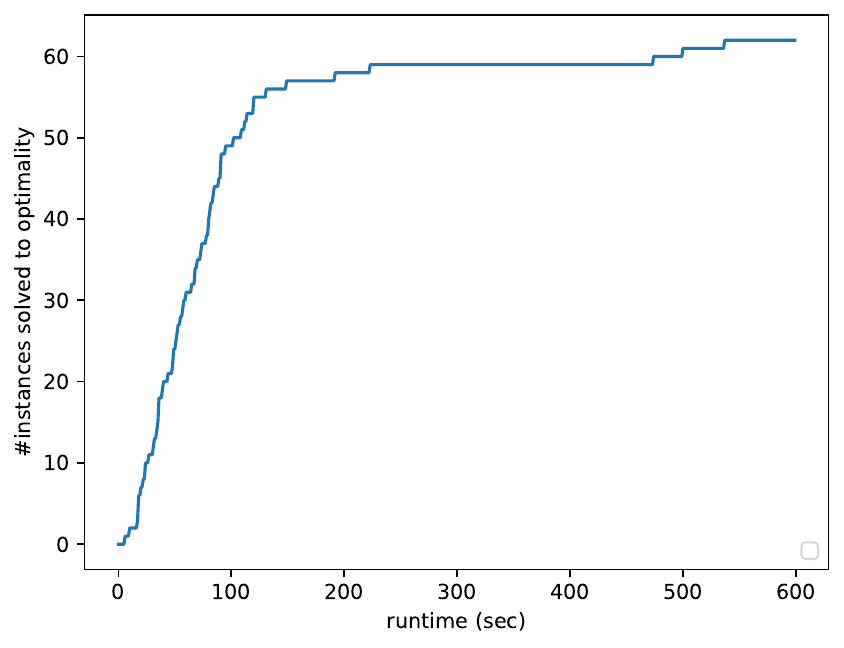}
    \caption{Runtime of algorithm for dynamic PRA with $\nT=365$}
    \label{fig:dyn:runtime}
\end{figure}
For application purposes however, the runtime per iteration is more interesting than the total runtime of $365$ iterations.
Therefore, we report in \cref{fig:dyn:runtime:perIter} the runtime of all $62\cdot 365=22630$ iterations individually.
The results show that all but three iterations are solved within less than 15 seconds,  cfg.~\cref{fig:dyn:runtime:box},
and more than 95\% of all iterations are solved within less a second, cfg.~\cref{fig:dyn:runtime:log-box}.
\begin{figure}[tb!]
\begin{subfigure}[b]{0.5\textwidth}
    \centering
    \includegraphics[width=\textwidth]{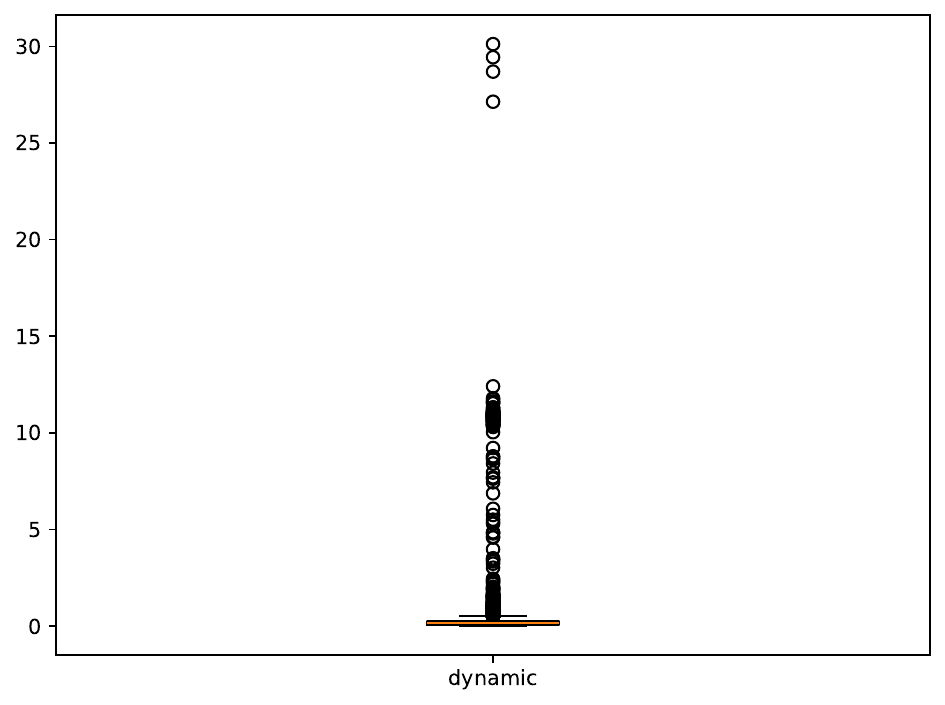}
    \caption{normal axis}
    \label{fig:dyn:runtime:box}
\end{subfigure}
\begin{subfigure}[b]{0.5\textwidth}
    \centering
    \includegraphics[width=\textwidth]{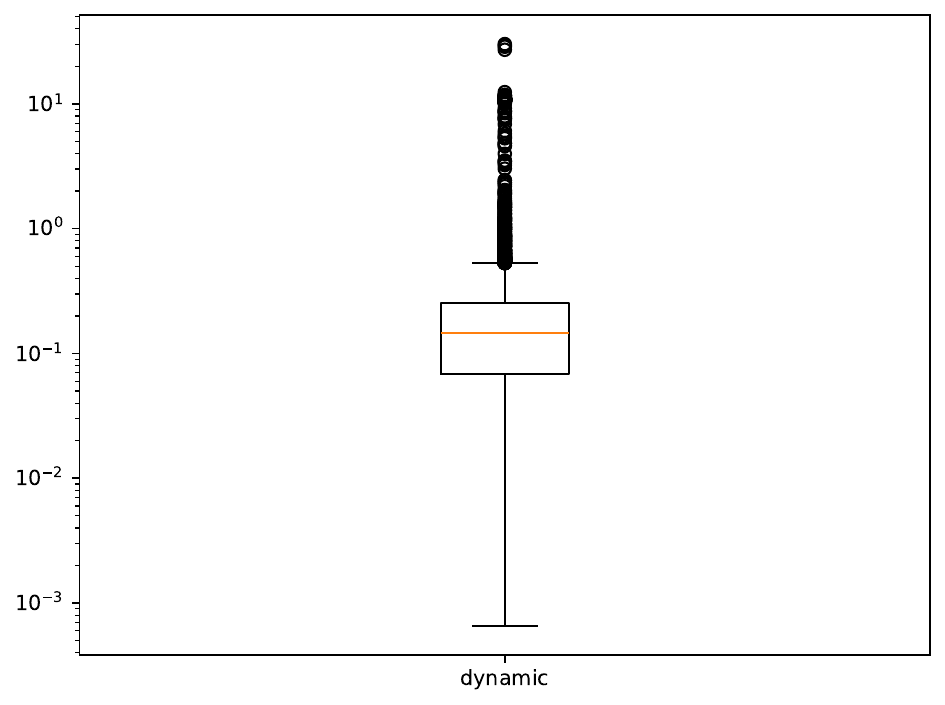}
    \caption{logscaled axis}
    \label{fig:dyn:runtime:log-box}
\end{subfigure}
\caption{Runtime per iteration of the algorithm for dynamic PRA}
\label{fig:dyn:runtime:perIter}
\end{figure}

Although our dynamic algorithm is a heuristic and, thus, does not guarantee a certain solution quality, we can assess a solution's quality using our combinatorial insights:
For $44$ instances the optimal value $\fpriv=\smax$ was achieved.
For $12$ instances, we achieve $\fpriv\geq 0,9885\smax$ and for one $\fpriv = 0,946\smax$.
The remaining $5$ instances have no private patients.
The high quality of our solutions w.r.t. $\fpriv$ is especially remarkable since in $26$ of them no transfers are needed, $28$ use between $1$ and $27$ transfers, and $8$ between 28 and 80 transfers.

\section{Future work}\label{sec:further_research}
We close this paper by pointing out multiple not yet fully explored aspects of PRA to inspire future research.
We give an overview over possible modelling extensions for our definition of PRA.
Where possible, we provide first experimental computational results and point out promising areas for further research.

\subsection{Scaling to multiple wards}\label{subsec:multiplewards}
Similar wards within the same specialty can be  planned jointly. Initial computational testing showed that, in this case, the run-time scales linearly up to a 150 rooms over a planning horizon of 365 time periods. 

The proposed IP modelling approach can also be extended to manage multiple dissimilar wards.
For this, new constraints must be added to model which patient can be assigned to which ward.
We evaluated this both for single specialties with up to $9$ normal and $2$ intensive care wards, and for the full RWTH Aachen University Hospital with $800$ rooms and $53.000$ patients (again over a planning horizon of $365$ time periods).
The run-time for the full hospital averages to about $2$s per iteration, with a larger variation than for single wards, i.e, some days requiring more than $10$s for an initial feasible solution.
However, in our modelling approach patient-assignment feasibility was only based on past patient-stay data, without consulting with medical professionals.
Thus, further research first needs to identify suitable metrics for patient-ward suitability and then include these as an objective component.

\subsection{Patient conflicts}
Due to medical or social reasons
there may be pairs of patients who cannot share a room, e.g., two patients with asthma or one woman whow just gave birth and one who lost the child.
Such so-called patient conflicts can easily be integrated into all our proposed IP-formulations by adding conflict constraints of assigning weights to patients.
Since we do not have any real data about patient conflicts, we experimented with a small number of randomly generated conflicts.
In our setting, this had neither an effect on the runtime nor the objective value.
However, in theory, a large number of conflicts may render an instance infeasible.
In the future, we will further investigate what conflicts occur in reality and constitutes their effect on runtime and solution quality.

\subsection{Patient preferences}
If more than one patient is assigned to a room, assigning suitable room-mates also constitutes a further goal \cite{roommates2}.
Specifically, patient combinations exist that may be beneficial both for patients and staff.
For example, it is known that patients recover faster if they feel comfortable, therefore, a room-mate whom they can relate to may be beneficial \cite{roommates1,roommates2}.
Or, if an international patient is not fluent in the local language, then it is beneficial for both patient and staff if the roommate can translate.
First computational experiments with IP-formulations showed that incorporating inter-patient preferences into the IP models leads to a significant increase in run-time.
Developing an efficient way to integrate the choice of suitable room-mates remains ongoing research.

\subsection{Accompanying person}
Some patients are entitled to bring an accompanying person with them to the hospital.
If the accompanying person occupies a normal patient bed,
this can easily be integrated into all our proposed IP-formulations by adding weights to patients and/or not implementing assignment variables for single rooms for the respective patients.
If the accompanying person sleeps on an additional roll-in bed and does not occupy a patient bed, it depends on the hospital's policy whether it is, e.g., desirable to avoid assigning multiple patients with an accompanying person to the same room or whether sex separation also needs to be respected for the accompanying person.
It is still ongoing research to determine the decisive criteria currently in use for this task.

\subsection{Uncertainty}
Considering uncertainty is essential to ensure real-world applicability and validity of results.
By using a dynamic time horizon, we already integrated the uncertain arrival of emergency patients.
A second and equally relevant factor, however, is the uncertainty in the length of stay.
It is easily possible to update a patient's planned discharge date in every iteration of our algorithm for dynamic PRA.
If a patient's stay is prolonged, the feasibility check should then be repeated for the affected time periods.
It is still an open question to assess the consequences of such updates on the solution quality.
For wards with a high uncertainty in patient length of stay, it also might be better to integrate the uncertainty more directly in the algorithm to compute robust solutions.
It is however also yet undefined what a \emph{robust} solution in this context means.

\section{Conclusion}\label{sec:conclusion}
In this work, we presented new combinatorial insights for the patient-to-room assignment problem with regard to feasibility and the assignment of private patients to single rooms.
We provided closed formulas to check an instance's feasibility and to compute the maximum number of single-room requests that can be fulfilled.
The computation time of those formulas is only a fraction of the time needed to build a corresponding IP.
This is of special interest, e.g., in the context of appointment scheduling in hospitals.

We further explored the performance of different IP-formulations.
One of our key insights here is the significant performance gap between objectives $\ftrans$ and $\fpriv$ which needs to be taken into account when designing IP-formulations.
Using all our insights, we proposed a fast IP-based solution approach that obtains high quality solutions which
showcases the benefits of combinatorial insights for developing solution approaches.
Our computational study
showed that even though PRA is $\mathcal{NP}$-hard, the PRA problem can be solved to optimality or at least close to optimality for realistically sized instances in reasonable time.

Finally, we elaborated on numerous possibilities for future work.

\section{Acknowledgements}
We thank the team at RWTH Aachen University hospital for their support.
This work was partially supported by the Freigeist-Fellowship of the Volkswagen Stiftung, and the German research council (DFG) Research Training Group 2236 UnRAVeL.
Simulations were performed with computing resources granted by RWTH Aachen University.

\bibliography{Bibliography/references.bib}

\end{document}